\theoremstyle{plain}
\newtheorem{theorem}{Theorem}[section]
\newtheorem{prop}[theorem]{Proposition}
\newtheorem{lemma}[theorem]{Lemma}
\theoremstyle{remark}
\newtheorem{remark}[theorem]{Remark}
\theoremstyle{definition}
\newtheorem{definition}[theorem]{Definition}
\newtheorem{example}[theorem]{Example}
\newcommand{\li}{\mathrm{Li}}
\newcommand{\espe}{\mathbb{E}}
\newcommand{\prob}{\mathbb{P}}
\newcommand{\dpoi}{\mathrm{Poi}}
\newcommand{\dge}{\mathrm{Geom}}
\newcommand{\dgeom}{\mathrm{Geom}^*}
\newcommand{\dbinneg}{\mathrm{NegBin}}
\newcommand{\toas}{\xrightarrow{\text{a.s.}}}
\newcommand{\Z}{\operatorname{\mathbb{Z}}}
\newcommand{\N}{\operatorname{\mathbb{N}}}
\newcommand{\dif}{\operatorname{d\!}}
\journal{arXiv}
\begin{document}

\begin{frontmatter}


\title{Estimating hazard rates from $\delta$-records in discrete distributions}

\author[1,2]{Martín Alcalde \orcidlink{0009-0007-4275-2998}}
\ead{malcalde@unizar.es}

\author[1,2]{Miguel Lafuente \orcidlink{0000-0001-8471-3224}}
\ead{miguellb@unizar.es}

\author[1,2]{F. Javier López \orcidlink{0000-0002-7615-2559}\corref{cor1}}
\ead{javier.lopez@unizar.es}

\author[3]{Lina Maldonado \orcidlink{0000-0003-1647-3462}}
\ead{lmguaje@unizar.es}

\author[1,2]{Gerardo Sanz \orcidlink{0000-0002-6474-2252}}
\ead{gerardo.sanz@unizar.es}

\affiliation[1]{organization={Department of Statistical Methods, University of Zaragoza},
    country={Spain}
    }
\affiliation[2]{organization={Institute for Biocomputation and Physics of Complex Systems (BIFI), University of Zaragoza},
    country={Spain}
    }
    
    \affiliation[3]{organization={Department of Applied Economics, University of Zaragoza},
    country={Spain}
    }

\cortext[cor1]{Corresponding author}
    
\begin{abstract}

This paper focuses on nonparametric statistical inference of the hazard rate function of discrete distributions based on $\delta$-record data. We derive the explicit expression of the maximum likelihood estimator and determine its exact distribution, as well as some important characteristics such as its bias and mean squared error. We then discuss the construction of confidence intervals and goodness-of-fit tests. The performance of our proposals is evaluated using simulation methods. Applications to real data are given, as well. The estimation of the hazard rate function based on usual records has been studied in the literature, although many procedures require several samples of records.  In contrast, our approach relies on a single sequence of $\delta$-records, simplifying the experimental design and increasing the applicability of the methods.
\end{abstract}

\begin{keyword}
Discrete hazard rate\sep records\sep $\delta$-records\sep near-records\sep nonparametric maximum likelihood estimation\sep nonparametric statistical inference\sep goodness-of-fit test.

\end{keyword}

\end{frontmatter}

\section{Introduction}\label{introdution}

Records are observations that are greater (or smaller) than all previous observations in a sequence. They arise naturally in various fields, including climate science, hydrology, and sports, among others. Since the pioneering work of \cite{Chandler52}, hundreds of research papers have explored their probabilistic properties and statistical applications. Excellent references on record theory and its applications include the monographs \cite{Arnold11,Ahs04,Ahs15}.

In the last decades, there has been growing interest in the applications of records in reliability. Records naturally appear in various reliability contexts where the extreme values of a sequence are of interest. A prominent example is destructive stress testing, where each unit is stressed only up to the minimum rupture force observed previously, yielding a sequence of lower records. Over the past few years, several studies have explored the use of records in reliability, particularly in estimating the stress-strength parameter---that is, the probability that a random unit does not fail when subjected to a random stress---for different families of distributions; see \cite{Saini25, Ahmadi24, Yu23, Tripathi22}. Other works have focused on estimating the parameters of these distributions within both frequentist and Bayesian frameworks; see, for instance, \cite{Khan24, Kumar24, Wang24, Abbas23, Zhao23, Pak19}. Recent research on statistical applications of records also includes fields such as quality control (\cite{Guo20}), actuarial science (\cite{Empacher24}) or climatology (\cite{Castillo24}).

Nonparametric statistics using record data has not been as extensively studied as its parametric counterpart. The first works in this area, \cite{Samaniego88, Gulati94,Gulati95}, developed estimation methods for the survival and probability density functions (see also \cite{Gulati03}). More recently, \cite{Arabi15} proposed alternative nonparametric estimators for the survival function. One reason for the limited use of records in nonparametric estimation may be the need to collect multiple samples to obtain reliable estimates. Indeed, the studies mentioned above consider situations where several independent samples of records have been collected. This is because, in a single sample, once a value is observed as an (upper) record, smaller values no longer appear, leaving many regions of the survival function unestimated. While collecting multiple samples may be feasible in some experiments, it is impractical in others. Moreover, due to the scarcity of records—recall that in an independent and identically distributed sequence of random variables, the expected number of observations before a new record occurs is infinite—it may take an extremely long time to gather sufficient data.

The scarcity of records is a well-known drawback of statistical inference based on records. For that reason, related statistics, such as $k$-records, generalized $k$-records or $c$-records, have been used for inference; see, e.g., \cite{Jafari25, Newer24, Vidovic24, Jose22, Schimmenti21, Wang20, Shafay17, Paul15}. Among these, $\delta$-records, first defined in \cite{Gouet07}, emerge as a natural generalization of records. Specifically, an observation is a $\delta$-record if it exceeds the previous maximum by at least $\delta$. When dealing with upper (lower) records, setting $\delta$ to a negative (positive) value increases the sample size. This suggests that $\delta$-records may enhance statistical inference. Indeed, parametric inference based on $\delta$-records has been shown to outperform standard record-based inference in both frequentist and Bayesian frameworks, for both discrete and continuous distributions; see \cite{Gouet12, Gouet14, Gouet20, LBSM13}, all of which focus on parametric models. The collection of $\delta$-records is also well-suited to destructive stress testing experiments, as described above. In such settings, units can be stressed slightly beyond the previous record (by $\vert\delta\vert$), producing a sequence of $\delta$-records. Thus, whenever a destructive stress testing experiment is used to generate records, a minor adjustment enables the collection of $\delta$-records. As demonstrated throughout the paper, the increased number of observations allows us to rely on a single sample rather than multiple independent samples, making the $\delta$-record approach particularly advantageous for industrial applications compared to the standard record-based method.
 
This paper addresses the nonparametric maximum likelihood estimation (NPMLE) of the hazard function using $\delta$-records. We focus on discrete distributions. Although reliability models often assume continuous distributions, the discrete case is also highly relevant, as failures are sometimes observed only at discrete inspection times or after a specific number of uses rather than being measured by clock time; see \cite{Nair18}. To the best of our knowledge, this is the first study on the use of $\delta$-records in nonparametric statistics. We have chosen to analyze the discrete case because it allows for explicit expressions for the estimators and their distributions. The treatment of the continuous case, which can be approached through truncation and smoothing, will be addressed in future work.

For the estimation of hazard rate function, we first express the likelihood of the sample of $\delta$-records in terms of this function. This likelihood, which takes the form of a product, is then used to derive an explicit expression for the maximum likelihood estimator (MLE). We prove that the MLE can attain any rational value in $[0,1)$ and determine its exact distribution. It is notable that the distribution of the estimator can be explicitly derived, especially considering that many theoretical results in nonparametric statistical inference rely on large-sample theory. From this distribution, we obtain explicit expressions for the bias and the mean squared error of the estimator. Additionally, we analyze the asymptotic behavior of the estimator as the sample size approaches infinity and establish its strong consistency. Finally, we show how our results can be applied to construct confidence intervals for the discrete hazard rate and to define goodness-of-fit tests. The performance of the methods is assessed through simulations, employing both Monte Carlo and parametric bootstrap techniques. We also present examples of applications to real data sets.

The paper is organized as follows. In Section \ref{secNot}, we introduce the main definitions and notation used throughout the paper. The expression for the MLE of the hazard rate function and its exact probability distribution are derived in Section \ref{section_likelihood}. This section also explores several properties of the estimator in both exact and asymptotic settings. Statistical inference procedures are proposed and assessed in Section \ref{section_practica}, followed by their application to real data in Section \ref{section_fixed_sample_size}. The paper concludes with a discussion of key findings and directions for future research (Section \ref{conc}). Additional auxiliary results used in the paper are provided in the Appendix.

\section{Notation and preliminaries}\label{secNot}

Throughout this paper, we work with sequences $\{X_n\}_{n\in\N}$ of independent and identically distributed (i.i.d.) random variables, where each $X_n$ takes values in $\mathbb{Z}_+ := \{0,1,2,\dots\}$, assuming infinite support. For every $i \in \mathbb{Z}_+$, let us denote by $P(i) $ the probability that $X_n = i$, and define the survival function at $i$ as $\overline{F}(i) := \prob(X_n>i)=\sum_{\ell>i} P(\ell)$. The hazard rate function at the point $j \in \mathbb{Z}_+$ is then given by
\begin{equation*}
h_j :=\prob(X_1=j\mid X_1\ge j)= \frac{P(j)}{\overline{F}(j-1)}.
\end{equation*}

The probability mass function $P(\cdot)$ and survival function $\overline{F}(\cdot)$ can be expressed in terms of the hazard rates as follows:
\begin{align*}
P(j) &= \begin{cases}
h_0 & \text{if } j=0,\\
h_j(1-h_{j-1}) \cdots (1-h_0) & \text{if } j > 0,
\end{cases}\\
\overline{F}(j) &= \begin{cases}
1 & \text{if } j=-1,\\
(1-h_j) \cdots (1-h_0) & \text{if } j \ge 0.
\end{cases}
\end{align*}

For any integers $\ell \ge j\ge0$, the conditional hazard rate, $h_{\ell\vert j}$, and conditional survival function, $\overline{F}(\ell\vert j)$, are defined as
\begin{align*}
h_{\ell\vert j} &:= \frac{P(\ell)}{\overline{F}(j-1)} = \begin{cases}
h_j & \text{if } \ell =j,\\
h_\ell(1-h_{\ell-1}) \cdots (1-h_j) & \text{if } \ell > j,
\end{cases}\\
&\overline{F}(\ell\vert j) := \frac{\overline{F}(\ell)}{\overline{F}(j-1)} = (1-h_\ell) \cdots (1-h_j).
\end{align*}

We borrow the notation for records, record times, etc., from \cite{Gouet14}. Let $\{M_n\}_{n\in\N}$ be the sequence of 
partial maxima defined by $M_n:=\max\{X_1, X_2, \dots, X_n\}$, $n\in\N$, and let $\{L_n\}_{n\in\N}$ be the sequence of record times, defined as $L_1:=1$, $L_n:=\min\left\{m>L_{n-1}:X_m>X_{L_{n-1}}\right\}$ for $n\ge2$. The sequence of records $\{R_n\}_{n\in\N}$ is defined as 
$R_n:=X_{L_n}$, $n\in\N$.
The concept of $\delta$-record was introduced in \cite{Gouet07}. Given $\delta\in\mathbb{R}$, the observation $X_n$ ($n\ge2$) is a $\delta$-record if
\begin{equation}
X_n > M_{n-1} + \delta.  \label{eq_def_delta_record}
\end{equation}
By convention, $X_1$ is always considered a $\delta$-record. It is straightforward to note that $\delta$-records with $\delta = 0$ correspond to standard records, while for $\delta < 0$, condition \eqref{eq_def_delta_record} is less restrictive than the standard record condition. As a result, when $\delta < 0$, $\delta$-records include more observations, effectively addressing the issue of record scarcity---a common limitation in record-based statistical inference from i.i.d. sequences. Throughout the rest of the paper, we consider only $\delta < 0$. Furthermore, since the random variables $X_i$ take values in the set of nonnegative integers, we can, without loss of generality, restrict ourselves to the case $\delta = -1, -2, \ldots$ Note that if $\delta = -1$, $\delta$-records coincide with weak records as defined in \cite{Vervaat73}, which have been extensively studied in the literature (see, for instance, \cite{Stepanov93, Gouet08, Castano13}). So as to improve readability, we write $k=-\delta-1$ along the paper; thus, $X_n$ is a $\delta$-record if and only if $X_n\ge M_{n-1}-k$.

Also, $\delta$-records are closely related to near-records as introduced in \cite{Balakrishnan2005}, which are observations that are close to being a record. Formally, an observation $X_n$ is a near-record with parameter $a >0$ if  $M_{n-1} - a < X_n \leq M_{n-1}$. Moreover, we say that
the near-record $X_n$ is associated with the $m$-th record $R_m$ if $L_m<n<L_{m+1}$; that is, it appears after the $m$-th record but before the $(m+1)$-th record in the sequence. Clearly, for $k=0,1,\ldots,$ an observation is a $\delta$-record (with $k=-\delta-1$) if and only if it is a record or a near-record with $a=k+1$.

We are interested in the nonparametric estimation of the discrete hazard rates $\mathbf{h}:=(h_0,h_1,\ldots)$ based on $\delta$-records. To this end, we use some results from \cite{Gouet14}, who find  and study the maximum estimator of the parameter of a Geometric distribution based on $\delta$-records.  As in that paper, we consider the sample 
\[
\mathbf{T}:= (\mathbf{R}, \mathbf{S},\mathbf{Y}),
\]
where $\mathbf{R}:= (R_1, \ldots, R_n)$ is the vector of the first $n$ record values; $\mathbf{S}:= (S_1,\ldots,S_n)$ is the vector of counts of near-records associated with each record, that is, $S_i$ is the number of near-records associated with $R_i$, and $\mathbf{Y} := (Y_1^1,\ldots,Y_1^{S_1};\ldots; Y_{n}^1,\ldots,Y_n^{S_n})$ are the values of the near-records, where $Y^j_i$ is the value of the $j$-th near-record associated with $R_i$.

The likelihood of $\mathbf{t} = (\mathbf{r}, \mathbf{s}, \mathbf{y})$, which is a realization of the sample $\mathbf{T}$ for a general distribution $F$ with values in $\mathbb{Z}_+$, is given in \cite{Gouet14}, Proposition 2.3, as
\begin{equation}
\mathcal{L}(\mathbf{t}) = \overline{F}(r_n)\prod_{i=1}^n \frac{P(r_i)}{\overline{F}(r_i+\delta)^{s_i+1}} \prod_{j=1}^{s_i} P(y^j_i), \label{eq_vero_2014}
\end{equation}
where $0 \leq r_1 < r_2 < \cdots < r_n$, $s_1, \ldots, s_n$ are nonnegative integers and $y^j_i \in \{r_i+\delta+1, r_i+\delta+2,\ldots,r_i\}$ for all $i=1,\ldots,n$, and $j=1,\ldots,s_i$ whenever $s_i \ge 1$.

Note that, as no assumption is made about the form of the hazard rates and no observations greater than $r_n$ are included in our sample, we cannot make any inference about the values $h_j$, for $j> r_n$. Thus, our goal is to find the MLE of $(h_0,h_1,\ldots,h_{r_n})$. Proposition \ref{EMV} below gives the explicit expression for the estimator.
We use the following notation. Given $k\in\Z_+$ and a sample $\mathbf{t}$, let, for $m \in \mathbb{Z}_+$ and $\ell = 0,\ldots,k$,

{\allowdisplaybreaks
\begin{align}
a_m^\ell &:= \begin{cases}
\displaystyle\sum_{i=1}^n\left(\mathbf{1}_{\{r_i=m\}} + \sum_{j=1}^{s_i}\mathbf{1}_{\{r_i=m,y_i^j=m\}}\right) &\text{if } \ell = 0,\\
\vspace{2mm}\displaystyle\sum_{i=1}^n\sum_{j=1}^{s_i}\mathbf{1}_{\{r_i=m+\ell,y_i^j=m\}} &\text{if } \ell = 1,\dots,k,
\end{cases}\label{a_from_t}\\
v_m^\ell &:= \sum_{j=0}^\ell a_m^j,\label{v_from_t}\\
n_m^\ell &:= 1 + \sum_{j=0}^{\ell} v_{m+j}^{\ell-j}.\label{n_from_t}
\end{align}
}
That is, $a_m^0$ is the number of times the value $m$ has been a weak record; while, for $\ell>0$, $a_m^\ell$ is the number of times the value $m$ has been equal to the current record minus $\ell$. Also,  $v_m^\ell$ is the number of times the value $m$ has been a near-record with parameter $\ell+1$, plus 1 if it has been a record. The term $n_m^\ell$ is $1$ plus the number of records whose value lies between $m$ and $m+\ell$, plus the total number of near-records with parameter $\ell+1$ associated with each of those records. We use the uppercase versions of $a_m^\ell$, $v_m^\ell$ and $n_m^\ell$ ($A_m^\ell$, $V_m^\ell$ and $N_m^\ell$) for the corresponding random variables.
\section{$\delta$-record-based NPMLE of the discrete hazard rate and its properties}\label{section_likelihood}
In this section, we derive the MLE of the hazard function and analyze its properties, including its exact distribution for fixed $k\in\Z_+$ and its asymptotic behavior as $k\to\infty$. The explicit expression for the MLE is presented in the next subsection.
\subsection{Expression for the estimator}
\begin{prop}\label{EMV}
The NPMLE of $\mathbf{h}$ based on the sample $\mathbf{T}$ is given by:
\begin{equation}
\hat{h}_{j,k} := \frac{V^k_j}{1+\sum\limits_{\ell=0}^k V^{k-\ell}_{j+\ell}} = \frac{V^k_j}{N^k_j}, \label{eq_estimador}
\end{equation}
for $j=0,1,\ldots,R_n$.
\end{prop}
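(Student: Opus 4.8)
\medskip
\noindent\emph{Proof plan.} The idea is to rewrite the likelihood \eqref{eq_vero_2014} entirely in terms of the hazard rates, show that it factorizes as a product over $m$ of terms of the form $h_m^{a}(1-h_m)^{b}$, and then maximize each factor on $[0,1)$ separately.

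First I would substitute $\delta=-k-1$ in \eqref{eq_vero_2014} and use that $P(z)/\overline{F}(r_i-k-1)=h_{z\vert r_i-k}$ for every $z\in\{r_i-k,\ldots,r_i\}$, with the usual convention $\overline{F}(j)=1$ for $j<0$ (so that when $r_i\le k$ this reads $h_{z\vert 0}=P(z)$). Distributing the $s_i+1$ copies of $\overline{F}(r_i-k-1)$ in the denominator among $P(r_i)$ and the $P(y_i^j)$'s, and writing $\overline{F}(r_n)=\prod_{\ell=0}^{r_n}(1-h_\ell)$, the likelihood becomes
\[
\mathcal{L}(\mathbf{t})=\Big(\prod_{\ell=0}^{r_n}(1-h_\ell)\Big)\prod_{i=1}^{n}\Big(h_{r_i\vert r_i-k}\prod_{j=1}^{s_i}h_{y_i^j\vert r_i-k}\Big).
\]
Every index occurring here is at most $r_i\le r_n$, so only $h_0,\ldots,h_{r_n}$ appear, in agreement with the remark that $h_j$ for $j>r_n$ cannot be estimated.

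Next I would expand each conditional hazard rate via $h_{z\vert z'}=h_z\prod_{\ell=z'}^{z-1}(1-h_\ell)$ and collect, for each fixed $m\in\{0,\ldots,r_n\}$, the exponent of $h_m$ and of $1-h_m$. The factor $h_m$ arises exactly once for each record equal to $m$ and once for each near-record of value $m$; splitting the latter according to the value $r_i=m+\ell$ ($\ell=0,\ldots,k$) of the associated record and recalling \eqref{a_from_t}--\eqref{v_from_t}, the exponent of $h_m$ equals $\sum_{\ell=0}^{k}a_m^\ell=v_m^k$. The factor $1-h_m$ arises once from $\overline{F}(r_n)$, once from $h_{r_i\vert r_i-k}$ for each record with $r_i-k\le m\le r_i-1$, and once from $h_{y_i^j\vert r_i-k}$ for each near-record with $r_i-k\le m\le y_i^j-1$. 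Writing $r_i=m+t$ with $1\le t\le k$ in the second group, and $r_i=m+t$, $y_i^j=m+u$ with $1\le u\le t\le k$ in the third, regrouping the resulting indicator sums and matching them with $a^{\ell}_{m+t}$ for $t=1,\ldots,k$ and $\ell=0,\ldots,k-t$, one finds that the exponent of $1-h_m$ equals $1+\sum_{t=1}^{k}\sum_{\ell=0}^{k-t}a_{m+t}^\ell$, which by \eqref{v_from_t} is $1+\sum_{t=1}^{k}v_{m+t}^{k-t}$, i.e.\ exactly $n_m^k-v_m^k$ in view of \eqref{n_from_t} (the $j=0$ summand $v_m^k$ in the definition of $n_m^k$ being cancelled by the subtraction). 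Therefore
\[
\mathcal{L}(\mathbf{t})=\prod_{m=0}^{r_n}h_m^{\,v_m^k}(1-h_m)^{\,n_m^k-v_m^k}.
\]

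Finally, since $h_0,\ldots,h_{r_n}$ are functionally unrelated parameters ranging freely over $[0,1)$, it suffices to maximize each factor $g(h)=h^{a}(1-h)^{b}$ on $[0,1)$, with $a=v_m^k\ge0$ and $b=n_m^k-v_m^k=1+\sum_{t=1}^{k}v_{m+t}^{k-t}\ge1$. A routine one-variable calculation (the case $a=0$, maximized at $h=0$, being immediate) gives the unique maximizer $h=a/(a+b)$; as $a+b=n_m^k$, this yields $\hat h_{m,k}=v_m^k/n_m^k$, which in the random-variable notation is precisely \eqref{eq_estimador}. \textbf{The main obstacle} is the combinatorial bookkeeping in the middle step: correctly accounting for the three sources of $1-h_m$ factors and reindexing the resulting double sums so as to recognize the quantities $v^\ell_m$ and $n^\ell_m$ defined in \eqref{v_from_t}--\eqref{n_from_t}.
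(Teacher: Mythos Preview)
Your proposal is correct and follows essentially the same approach as the paper: rewrite the likelihood \eqref{eq_vero_2014} in terms of the hazard rates, obtain the factorization $\prod_{m=0}^{r_n}h_m^{v_m^k}(1-h_m)^{1+\sum_{t=1}^k v_{m+t}^{k-t}}$, and maximize each factor separately over $[0,1)$. The only difference is that the paper compresses your careful exponent bookkeeping into the single sentence ``the last equality follows by counting the number of times each value $0,\ldots,r_n$ may be found in $\mathbf{t}$,'' whereas you spell out the three sources of $(1-h_m)$-factors and match the resulting indicator sums to the $a_m^\ell$'s; your intermediate use of the conditional hazards $h_{z\vert r_i-k}$ to absorb the denominators $\overline{F}(r_i-k-1)^{s_i+1}$ is a tidy device that the paper does not make explicit.
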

\begin{proof}
We can express \eqref{eq_vero_2014} as a function of the discrete hazard rates $\mathbf{h}$ yielding
\begin{align}
\mathcal{L}(\mathbf{t}\mid \mathbf{h}) &= (1-h_{r_n})\cdots(1-h_0) \prod_{i=1}^{n} h_{r_i}(1-h_{r_{i}-1})\cdots(1-h_0)\prod_{j=1}^{s_i} h_{y^j_i}(1-h_{y^{j}_i - 1})\cdots(1-h_0)\notag\\
&=\prod_{j=0}^{r_{n}} h_j^{v^k_j}(1-h_j)^{1 + \sum\limits_{\ell=1}^k v_{j+\ell}^{k-\ell}}.\label{eq_verosimilitud}
\end{align}
The last equality follows by counting the number of times each value $0,\ldots,r_n$ may be found in $\mathbf{t}$.
Note that in the display above, the index $i$ in the first product corresponds to the index of records $1,\ldots,n$ and the index $j$ in the second product corresponds to the index of the near-records associated with $r_i$, which are $j=1,\ldots, s_i$, while the index $j$ in \eqref{eq_verosimilitud} refers to the integer values $0,\ldots,r_n$. Given that the likelihood function is expressed as the product of terms involving each of the hazard rates, it is immediate to check that \eqref{eq_verosimilitud} is maximized when each $h_j$, $j=0,\ldots,r_n$, takes the value 
\begin{equation*}\frac{v^k_j}{1+\sum\limits_{\ell=0}^k v^{k-\ell}_{j+\ell}},\end{equation*}
so the result is proved. 
\end{proof}

\begin{remark} The expression of the MLE of each $h_j$ has a simple interpretation in the case that the maximum of the sample ($r_n$) is greater than  $j+k$. This interpretation is given in terms of the variables $\tilde L_m=\min\{n\ge1:X_n>m\}$, $m\in\Z_+$, which represents the number of observations in the original sequence $X_1,\ldots,X_n,\ldots$ until an observation greater than $m$ is obtained. With this definition, the MLE of $h_j$ in \eqref{eq_estimador} can be expressed as
\begin{equation}
\hat{h}_{j,k}=\frac{\sum\limits_{i=1}^{\tilde L_{j+k}} \mathbf{1}_{\{X_i = j\}}}{ \sum\limits_{i=1}^{\tilde L_{j+k}} \mathbf{1}_{\{X_i \ge j\}}}. \label{estimator_form_2}
\end{equation}
That is, the numerator is the number of times the integer $j$ is observed as a $\delta$-record until a value greater than $j+k$ is observed, while the denominator is the number of $\delta$-records greater than or equal to $j$ until a value greater than $j+k$ is observed. In conclusion, if $r_n\ge j+k$, the MLE of $h_j$ based on $\delta$-records coincides with the classical nonparametric estimator (see \cite{Karlis07}) of the hazard rate obtained from the $\delta$-record sample stopped at the random time $\tilde L_{j+k}$. \end{remark}

\subsection{Distribution of $\hat{h}_{j,k}$}\label{Properties}
The main result of this paper (Theorem \ref{theorem_rhat}) establishes the exact distribution of the NPMLE $\hat{h}_{j,k}$. Its proof is lengthy and divided into several propositions that analyze the behavior of the random variables $V_m^\ell$ introduced earlier, which may also be of independent interest. These propositions, along with their proofs, follow Theorem \ref{theorem_rhat} within this section.

First, let us introduce the following notation. Given $k\in\N$ and $j\in\Z_+$, we define
\begin{equation*}
	d_{j,k} := 1-\sum_{i=1}^k h_{j+i\vert j}.
\end{equation*}
Observe that $d_{j,k}$ represents the probability that $X_i$ is either equal to $j$ or greater than $j+k$ conditioned on $\{X_i\ge j\}$. In other words, $d_{j,k}=h_j+\overline{F}(j+k\vert j)$.

\begin{theorem}\label{theorem_rhat}
Let  $k \in \mathbb{N}$ and $ j \in \mathbb{Z}_+$ such that $j+k\le r_n$. The estimator $\hat{h}_{j,k}$ takes values in $\mathbb{Q} \cap [0,1)$ and its probability mass function is
\begin{align*}
\mathbb{P}(\hat{h}_{j,k} = q) = (d_{j,k} - h_j) \sum_{i=1}^\infty \binom{bi-1}{ai}\ h_j^{ai} (1 - d_{j,k})^{(b-a)i-1},
\end{align*}
for all \( q = a/b \), with \(a, b \in \mathbb{Z}_+ \), $a<b$ and \( \gcd(a, b) = 1 \). Moreover, the cumulative distribution function \( G_{j,k} \) of \(\hat{h}_{j,k}\) on \([0,1)\) is given by
\begin{equation*}
G_{j,k}(x):= \prob(\hat{h}_{j,k} \le x) 
= (d_{j,k} - h_j) \sum_{m=0}^\infty (1 - d_{j,k})^m \sum_{\ell=0}^{\left\lfloor \frac{(1+m)x}{1-x} \right\rfloor}\ \binom{m+\ell}{\ell}\ h_j^\ell
\end{equation*}
for all \( x \in [0,1) \).
\end{theorem}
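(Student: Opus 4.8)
The strategy is to reduce to the alternative expression \eqref{estimator_form_2} for the estimator given in the Remark after Proposition~\ref{EMV}, and then carry out a direct combinatorial computation. Write $\tilde L:=\tilde L_{j+k}$. Since $j+k\le r_n$, that Remark yields $\hat h_{j,k}=U/W$, where $U:=\sum_{i=1}^{\tilde L}\mathbf 1_{\{X_i=j\}}$ and $W:=\sum_{i=1}^{\tilde L}\mathbf 1_{\{X_i\ge j\}}$. Because $X_{\tilde L}>j+k\ge j$ contributes to $W$ but not to $U$, we have $0\le U\le W-1$, which already proves $\hat h_{j,k}\in\mathbb{Q}\cap[0,1)$. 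It then remains to compute the joint law of $(U,W)$, equivalently of $(U,T)$ with $T:=W-1-U$ counting the $X_i$, $i<\tilde L$, with $j<X_i\le j+k$.

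The crucial step is to identify the law of $(U,T)$. Listing the observations $X_i$ with $X_i\ge j$ in the order they appear produces an i.i.d.\ sequence with common law $\mathcal L(X_1\mid X_1\ge j)$ (standard thinning of an i.i.d.\ sequence by an event), and $\tilde L$ is the index of the first term of that sub-sequence exceeding $j+k$. Labelling each such term as type $0$ if it equals $j$, type $1$ if it lies in $\{j+1,\dots,j+k\}$, and type $2$ if it exceeds $j+k$, the corresponding probabilities are
\[
p_0=h_j,\qquad p_1=1-d_{j,k},\qquad p_2=d_{j,k}-h_j,
\]
using $\prob(X_1=j\mid X_1\ge j)=h_j$, $\prob(X_1>j+k\mid X_1\ge j)=\overline F(j+k\mid j)=d_{j,k}-h_j$, and $p_0+p_1+p_2=1$; note $p_2>0$ as the support is infinite. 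Then $(U,T)=(u,t)$ exactly when, among the first $u+t$ terms of the sub-sequence, $u$ are of type $0$ and $t$ of type $1$ (in any order), and the $(u+t+1)$-st is of type $2$; hence $(U,T)$ has the negative multinomial law
\[
\prob(U=u,\ T=t)=\binom{u+t}{u}p_0^{\,u}p_1^{\,t}p_2,\qquad u,t\ge0 .
\]

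Both formulas then follow by bookkeeping. For the mass function, fix $q=a/b$ with $\gcd(a,b)=1$ and $a<b$; since $\gcd(a,b-a)=1$, the equation $U/(U+T+1)=a/b$ is equivalent to $U=ai$, $T=(b-a)i-1$ for some integer $i\ge1$ (and then $U+T=bi-1$). Summing $\prob(U=ai,\ T=(b-a)i-1)$ over $i\ge1$ and substituting $p_0,p_1,p_2$ gives the claimed expression for $\prob(\hat h_{j,k}=q)$. For the distribution function, the event $\hat h_{j,k}\le x$ translates into $u\le x(u+t+1)$, i.e.\ $u\le\frac{(t+1)x}{1-x}$, i.e.\ $u\le\big\lfloor\frac{(1+t)x}{1-x}\big\rfloor$ since $1-x>0$ and $u\in\mathbb{Z}$; summing the negative multinomial masses over this region with $t$ outermost and $u$ innermost, and then renaming $(t,u)$ as $(m,\ell)$, reproduces $G_{j,k}(x)$ exactly.

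The main obstacle will be the thinning step behind the negative multinomial law: one must justify rigorously that the values of an i.i.d.\ sequence at the random times when $X_i\ge j$ form an i.i.d.\ sequence with the conditional law, and that $\tilde L$ is a stopping time for which this reduction is valid. Once that is settled, the rest is elementary manipulation of binomial coefficients and geometric-type series; in particular, one could alternatively bypass the thinning lemma by conditioning on $\tilde L=\nu$ and summing over the placements of the type-$0$, type-$1$ and below-$j$ observations among $X_1,\dots,X_{\nu-1}$, though that route is appreciably more tedious.
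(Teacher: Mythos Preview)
Your argument is correct and, for the purpose of Theorem~\ref{theorem_rhat} alone, considerably shorter than the paper's. The paper does \emph{not} use the alternative representation \eqref{estimator_form_2}; instead it establishes by an induction on $k$ (Propositions~\ref{prop_xi}, \ref{prop_k_1}, \ref{theorem_dist_conjunta}) that the full vector $(V_j^k,\dots,V_{j+k}^0)$ is multidimensional Geometric with parameters $(h_j,h_{j+1\mid j},\dots,h_{j+k\mid j})$, and only then collapses to the pair $(V_j^k,\sum_{\ell\ge1}V_{j+\ell}^{k-\ell})$ (Proposition~\ref{corollary_n_jk}) to obtain exactly your negative-multinomial law for $(U,T)$. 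From that point on the two arguments coincide (Diophantine enumeration of $\{U/(U+T+1)=a/b\}$, then summation). Your thinning shortcut bypasses the entire inductive chain at the cost of invoking the standard fact that the values of an i.i.d.\ sequence at the hitting times of $\{X_i\ge j\}$ are themselves i.i.d.\ with the conditional law; the paper's longer route, on the other hand, yields the full $(k{+}1)$-dimensional joint law of the $V$'s, which it advertises as being of independent interest. One small point worth tightening: you rely on the Remark's identity $\hat h_{j,k}=U/W$, which the paper states for $r_n>j+k$ while the theorem only assumes $r_n\ge j+k$; it is easy (and worth a line) to check that the identity, and hence your whole computation, still goes through when $r_n=j+k$ because the sample then contains all near-records associated with $R_n$.
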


Before characterizing the distribution of $\hat{h}_{j,k}$, we need to establish the probabilistic properties of the random vector $(V^k_j,\ldots,V^0_{j+k})$. To this end, we introduce new random variables that are similar to those defined by Equations \eqref{a_from_t}, \eqref{v_from_t} and \eqref{n_from_t}. Assume that a sequence $\{X_n\}_{n\in\N}$ is given. Let us define
\begin{align}
\tilde{A}_m^\ell &:= \begin{cases}\mathbf{1}_{\{X_1 = m\}} + \displaystyle\sum_{i=2}^\infty \mathbf{1}_{\{X_i = m, \ M_{i-1} \le m\}} &\text{if } \ell = 0,\\
\displaystyle\sum_{i=2}^\infty\mathbf{1}_{\{X_i = m, \ m=M_{i-1}-\ell\}} &\text{if } \ell =1,\ldots,k,
\end{cases}\label{tilde_A}\\
\tilde V_m^\ell &:= \sum_{j=0}^\ell \tilde{A}_{m}^j,\label{tilde_V}\\
\tilde N_m^\ell &:= 1 + \sum_{j=0}^\ell \tilde{V}_{m+j}^{\ell-j},\notag
\end{align}
for all $m\in\mathbb{Z}_+$ and $\ell=0,1,\ldots,k$. The random variable $\tilde{A}_m^0$ counts the number of times that $m$ is a weak record, and $\tilde{A}_m^\ell$ for $\ell > 0$ counts the number of times that $m$ is equal to the previous maximum minus $\ell$ in the entire sequence. This is in contrast to the original variables $A_m^\ell$, which are computed from the sample $\mathbf{t}$. Let us clarify this apparently subtle difference. The MLE of $h_j$ in \eqref{eq_estimador} is a function of the random vector $(V_j^k,\ldots,V_{j+k}^0)$. When a sample $\mathbf{t}$ of the first $n$ records and their associated near-records is observed, the random variables $A_m^\ell$ (and therefore $V_m^\ell$ and $N_m^\ell$) coincide with the corresponding tilde version defined in \eqref{tilde_A} if $r_n\ge j+k$, since this conditions ensures that all the $\delta$-records with value $j$ in the entire sequence are included in our sample $\mathbf{t}$. On the other hand, when $r_n < j+k$, the value $j$ may be observed as a near-record associated with future records. In this case, it would not be included in our sample and some of the random variables $A_m^l$ may differ from $\tilde A_m^\ell$. Thus, $A_m^\ell$ may be seen as a censored value of $\tilde A_m^\ell$. In order to avoid extra difficulties in the derivation of $\hat{h}_{j,k}$, we assume that $r_n\ge j+k$, meaning that the sample contains at least one value greater than or equal to $j+k$. In this situation, all the values of $A_m^\ell, V_m^\ell$ and $N_m^\ell$ coincide with the values of $\tilde A_m^\ell, \tilde V_m^\ell$ and $\tilde N_m^\ell$. Therefore, we study the distribution of $\hat h_{j,k}$ by examining the distribution of variables $\tilde V_m^\ell$. In practice, in order to apply Theorem \ref{theorem_rhat} below, it is enough to sample until a record greater than $j+k$ is observed. See Remark \ref{jgrande}. 
To simplify the notation, in the rest of this section we write $A_m^\ell, V_m^\ell$ and $N_m^\ell$ instead of $\tilde A_m^\ell, \tilde V_m^\ell$ and $\tilde N_m^\ell$.

Firstly, we analyze the probabilistic properties of the random variables $A_m^\ell$. Note that there is a relationship between $V_m^\ell$ and $V_m^{\ell+1}$ defined as
\begin{equation}
V_m^{\ell+1} = A_m^{\ell+1} + V_m^\ell. \label{eq_relac_k+1_k}
\end{equation}
This recurrence will be used throughout this section. Now, note that the distribution of the vector $(V_j^k, \dots, V^0_{j+k})$ depends on the vector $\mathbf{A}_{j,k}$, given by
{\allowdisplaybreaks
\begin{align}\label{vectores}
(&A_j^0, A_{j+1}^0, \dots, A_{j+k-1}^0, A_{j+k}^0,\notag\\
&A_j^1, A_{j+1}^1, \dots, A_{j+k-1}^1,\notag\\
&\vdots \\
&A_j^{k-1}, A_{j+1}^{k-1},\notag\\
&A_j^k).\notag
\end{align}}

To characterize the distribution of $\mathbf{A}_{j,k}$, we need the joint distribution of the random vector in \eqref{vectores}. Unfortunately, the vectors in different lines of \eqref{vectores} are not independent. We can reorder the components of  $\mathbf{A}_{j,k}$ in a different way to make the analysis easier. In fact, the vector may be decomposed into the following mutually independent sub-vectors:
\begin{align*}
&A_j^0,\\
&(A_j^1, A_{j+1}^0),\\
&\vdots\\
&(A_j^{k-1}, A_{j+1}^{k-2}, \dots, A_{j+k-1}^0),\\
&(A_j^k, A_{j+1}^{k-1}, \dots, A_{j+k-1}^1, A_{j+k}^0).
\end{align*}

The behavior of the first variable $A_j^0$ is well-known, as it corresponds to the number of times the value $j$ is a weak record, which was analyzed in \cite{Stepanov93} (see Lemma \ref{lemma_Stepanov}). From this result, we have that
\begin{equation}
\prob(A_j^0 = a_j^0) = h_j^{a_j^0}(1-h_j), \quad a_j^0 \in \mathbb{Z}_+. \label{eq_prob_0}
\end{equation}

In the next result, we find the distribution of the remaining vectors, ultimately obtaining the joint distribution of the vector \eqref{vectores}. We adopt the classical notation for multinomial coefficients. For $i_1,\ldots,i_{n}\in\Z_+$,
$$
\binom{i_1+\cdots+i_n}{i_1,\ldots,i_n} = \frac{(i_1+\cdots+i_n)!}{i_1!\cdots i_n!}.
$$

\begin{prop}\label{prop_xi} For $\ell =1,\ldots,k$, the probability mass function of $(A_j^\ell, A_{j+1}^{\ell-1}, \dots, A_{j+\ell-1}^1, A_{j+\ell}^0)$
is 
\begin{align*}
\prob(&A_j^\ell = a^\ell_j, A_{j+1}^{\ell-1} = a_{j+1}^{\ell-1}, \ldots,A^{1}_{j+\ell-1} = a^{1}_{j+\ell-1},A^{0}_{j+\ell} = a^{0}_{j+\ell})\\
&=\begin{cases}
1 - h_{j+\ell} &\text{if } a_{j+m}^{\ell-m} = 0,m=0,\ldots,\ell,\\
h_{j+\ell}\ h_{j+\ell\vert j}^{a^0_{j+\ell}-1}\cdots h_{j+1\vert j}^{a_{j+1}^{\ell-1}}\ h_j^{a_j^\ell}\ \overline{F}(j+\ell\vert j)\ \displaystyle{a_j^{\ell} + a_{j+1}^{\ell-1}+\cdots + a_{j+\ell}^0 - 1\choose a_j^\ell,a_{j+1}^{\ell-1},\ldots, a_{j+\ell}^0 - 1}^{\phantom{\frac{1}{2}}} &\text{if }  a_{j+\ell}^{0} \ge 1, a_{j+m}^{\ell-m} \in\Z_+,m=1,\ldots,\ell.
\end{cases}
\end{align*}
\begin{proof}
First, observe that if $a_{j+\ell}^0 = 0$, then it follows that $a^{\ell-m}_{j+m} = 0$ for all $m$. Thus, from \eqref{eq_prob_0}, $\prob(A_{j+\ell}^0 = 0) = 1-h_{j+\ell}$. Let us consider the case  $a_{j+\ell}^0 \ge 1$. Let 
\begin{equation*}
E=\left\{A_j^\ell = a^\ell_j, A_{j+1}^{\ell-1} = a_{j+1}^{\ell-1}, \ldots,A^{1}_{j+\ell-1} = a^{1}_{j+\ell-1},A^{0}_{j+\ell} = a^{0}_{j+\ell}\right\}.
\end{equation*}
 For the event $E$ to occur, a record with value $j+\ell$ must be observed, which happens with probability $h_{j+\ell}$. Once this record is observed, $E$ occurs if and only if, before the arrival of a new record, we observe $a_{j+\ell}^0-1$ times the value $j+\ell$, $a_{j+\ell-1}$ times the value $j+\ell-1$, $\ldots$, and $a_j^\ell$ times the value $j$. Since these observations (and the new record) all have values greater than or equal to $j$, we can restrict ourselves to the random variables conditioned on being greater than or equal to $j$. This means that the probability of observing $j'\ge j$ is $h_{j'\vert j}$.  Therefore, the probability of $E$ is easily seen to be as stated in the proposition. 
\end{proof}
\end{prop}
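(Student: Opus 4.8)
The plan is to reduce the whole computation to the i.i.d.\ subsequence formed by those observations that are at least $j$. Since $\overline{F}(j-1)>0$ for every $j$ under the infinite-support assumption, almost surely infinitely many terms of $\{X_n\}$ fall in $\{j,j+1,\dots\}$, and by the standard thinning property this subsequence is i.i.d.\ with law $\prob(X_1=j+m\mid X_1\ge j)=h_{j+m\vert j}$, $m\ge0$. Every variable in the vector $(A_j^\ell,\dots,A_{j+\ell}^0)$ counts observations whose value lies in $\{j,\dots,j+\ell\}$, and the record structure at levels $\ge j$ is determined by this subsequence alone; the discarded observations (those $<j$) affect none of the counts and never terminate the relevant record era. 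Hence $\prob(E)$ may be computed directly in the conditioned subsequence. A convenient bookkeeping fact is that the hazard rate of this subsequence at level $j+m$ equals the original $h_{j+m}$, so the weak-record result of Stepanov (Lemma \ref{lemma_Stepanov}) applies verbatim at each level.

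For the degenerate configuration, I would note that $A_{j+\ell}^0\ge1$ is equivalent to the running maximum equalling $j+\ell$ at some time, i.e.\ to $j+\ell$ being a weak record value. Therefore $A_{j+\ell}^0=0$ forces $A_{j+m}^{\ell-m}=0$ for every $m<\ell$ (no era with maximum $j+\ell$ ever opens), and the probability of this all-zero configuration is $\prob(A_{j+\ell}^0=0)=1-h_{j+\ell}$ by \eqref{eq_prob_0}. This settles the first branch.

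For the main branch $a_{j+\ell}^0\ge1$, the plan is to split $E$ into (i) the value $j+\ell$ being a weak record, of probability $h_{j+\ell}$, and (ii) the era opened by that record carrying exactly the prescribed counts. For (ii) I would invoke the strong Markov property of the conditioned subsequence at the first instant its maximum reaches $j+\ell$: the subsequent draws are fresh i.i.d.\ terms, each equal to $j+m$ with probability $h_{j+m\vert j}$ (continuing the era) or exceeding $j+\ell$ with probability $\overline{F}(j+\ell\vert j)$ (closing it). Since the opening observation is the first of the era and equals $j+\ell$, the era must produce $a_{j+\ell}^0-1$ further copies of $j+\ell$ and $a_{j+m}^{\ell-m}$ copies of each $j+m$, $m<\ell$, in arbitrary order, terminated by a single draw exceeding $j+\ell$. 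Counting the orderings of these interior observations yields the multinomial coefficient $\binom{a_j^\ell+\cdots+a_{j+\ell}^0-1}{a_j^\ell,\dots,a_{j+\ell}^0-1}$, and each ordering carries probability $h_j^{a_j^\ell}\,h_{j+1\vert j}^{a_{j+1}^{\ell-1}}\cdots h_{j+\ell\vert j}^{a_{j+\ell}^0-1}\,\overline{F}(j+\ell\vert j)$, where I use $h_{j\vert j}=h_j$. Multiplying by $h_{j+\ell}$ gives the stated expression.

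The delicate points, on which I would concentrate, are the reduction to the conditioned subsequence and the conditional independence in (ii). One must check that $E$ is measurable with respect to the $\sigma$-algebra generated by the $\ge j$ observations, so that thinning out the $<j$ terms is legitimate and the subsequence genuinely inherits the conditional law; and that, conditionally on $j+\ell$ being a weak record, the era's remaining draws are independent of the past with the full conditional law rather than some further-conditioned one, which is precisely what the strong Markov property at the first hitting time of maximum $j+\ell$ supplies. The remaining algebra---verifying that the exponents and both entries of the multinomial coefficient are consistent with the ``$-1$'' arising from the opening record---is routine.
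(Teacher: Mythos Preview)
Your proposal is correct and follows essentially the same argument as the paper: reduce to the event that $j+\ell$ is a record value (probability $h_{j+\ell}$), then compute the distribution of the era's contents as a multinomial-type string terminated by a draw exceeding $j+\ell$, using the conditional probabilities $h_{j'\vert j}$. The paper's proof is a terse version of exactly this; your added care about the thinning to the $\{\ge j\}$-subsequence and the strong Markov justification simply makes explicit what the paper leaves implicit.
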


Using the behavior of the vectors $(A_j^\ell,A_{j+1}^{\ell-1},\ldots,A_{j+\ell}^0)$ for all $\ell$, in Proposition \ref{theorem_dist_conjunta} below we derive the distribution of the vector $(V_j^k,\ldots,V_{j+k}^0)$, which is necessary for finding the distribution of $\hat h_{j,k}$. To proceed, we utilize the multidimensional Geometric distribution, which corresponds to the number of failures of different types before the first success in independent trials---see Definition \ref{def_geometric} in the Appendix.

Lemma \ref{propdistrgeom} in the Appendix states the properties of the multidimensional Geometric distribution that will be needed later.
We now find the distribution of $(V_j^1,V_{j+1}^0)$, which are used in the inductive proof of Proposition \ref{theorem_dist_conjunta} below. 

\begin{prop}
Let $j\in\Z_+$ such that $j+1\le r_n$. The vector $(V^1_{j},V^0_{j+1})$ follows a $\mathrm{Geom}^*(h_j, h_{j+1\vert j})$ distribution.\label{prop_k_1}
\begin{proof}
Let $j, a, b \in \mathbb{Z}_+$. We aim to verify that 
\begin{equation}
\prob(V^1_j = a,V^0_{j+1} = b) = h_j^a\ h_{j+1\vert j}^b\ \overline{F}(j+1\vert j)\ {a+b\choose a}.\label{geom_dum_2}
\end{equation}
First, if $b = 0$, then it follows  from \eqref{eq_relac_k+1_k} that $V^1_j = A^0_j$. Thus, by Lemma \ref{lemma_Stepanov}, 
\begin{equation*}
\prob(V^1_j = a, V^0_{j+1} = 0) = \prob(A^0_j = a, A^0_{j+1}=0) =h_j^a(1-h_j)(1-h_{j+1}) = h_j^a\ \overline{F}(j+1\vert j),
\end{equation*} which satisfies \eqref{geom_dum_2}. Now, for $b > 0$, by \eqref{eq_relac_k+1_k}, we have 
$$\{V^1_j = a, V^0_{j+1} = b\} = \bigcup_{a_0 + a_1 = a}\{V^0_j = a_0, A^1_j = a_1, V^0_{j+1} = b\}.
$$
To compute the probability of this event, we must consider the different combinations of $a_0$ and $a_1$ such that $a_0+a_1=a$. 
Note that the events $\{V_j^0=a_0\}$ and $\{A_j^1=a_1,V_{j+1}^0=b\}$ are independent. From Lemma \ref{lemma_Stepanov},
\begin{equation}
\prob(V^0_j = a_0) = h_j^{a_0}(1 - h_j). \label{bb}
\end{equation}
Also, from Proposition \ref{prop_xi},
\begin{equation}
\prob(A^1_j = a_1, V^0_{j+1} = b) = h_{j+1}\ h_j^{a_1}\ h_{j+1 \vert j}^{b-1}\ \overline{F}(j+1 \vert j)\ \binom{a_1 + b - 1}{b-1}. \label{bbb}
\end{equation}
Thus, gathering \eqref{bb} and \eqref{bbb}, we obtain 
\begin{align*}
\prob(V^1_j = a, V^0_{j+1} = b) &= \sum_{a_0 + a_1 = a} \prob(V^0_j = a_0)\ \prob(A^1_j = a_1, V^0_{j+1} = b) \\
&= \sum_{a_1 = 0}^a h_j^a\ h_{j+1 \vert j}^b\ \overline{F}(j+1 \vert j)\ \binom{a_1 + b - 1}{b - 1} \\
&= h_j^a\ h_{j+1 \vert j}^b\ \overline{F}(j+1 \vert j)\ \binom{a+b}{a},
\end{align*}
where the last step comes from the identity 
\begin{equation*}
\binom{a+b}{a} = \sum_{a_1=0}^a \binom{a_1 + b - 1}{b-1}, \quad a \geq 0, \quad b \geq 1, \label{igualdad_a_b}
\end{equation*}
which is a particular case of the identity in Lemma \ref{lemma_combinatoria} in the Appendix.
\end{proof}
\end{prop}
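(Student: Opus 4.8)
The plan is to reduce the statement to verifying that the joint probability mass function of $(V^1_j,V^0_{j+1})$ is
\[
\prob(V^1_j=a,\,V^0_{j+1}=b)=\binom{a+b}{a}\,h_j^a\,h_{j+1\vert j}^b\,\overline{F}(j+1\vert j),\qquad a,b\in\mathbb{Z}_+ .
\]
Since $h_j+h_{j+1\vert j}+\overline{F}(j+1\vert j)=h_j+(1-h_j)h_{j+1}+(1-h_j)(1-h_{j+1})=1$, the right-hand side is exactly the mass function of a $\dgeom(h_j,h_{j+1\vert j})$ variable in the sense of Definition \ref{def_geometric}, so nothing remains once this identity is established.

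The route I would take is probabilistic. Writing $V^1_j=A^0_j+A^1_j$ and $V^0_{j+1}=A^0_{j+1}$, one sees that $V^1_j$ counts exactly the observations equal to $j$ that occur while the running maximum is at most $j+1$, and $V^0_{j+1}$ counts exactly the observations equal to $j+1$ occurring while the running maximum is at most $j+1$; neither can increase after an observation exceeding $j+1$ has appeared. Hence, with $\tilde L_{j+1}=\min\{n\ge1:X_n>j+1\}$ as in the Remark above,
\[
V^1_j=\sum_{i=1}^{\tilde L_{j+1}-1}\mathbf{1}_{\{X_i=j\}},\qquad V^0_{j+1}=\sum_{i=1}^{\tilde L_{j+1}-1}\mathbf{1}_{\{X_i=j+1\}} .
\]
Observations smaller than $j$ are irrelevant, so I would pass to the thinned subsequence formed by the observations that are $\ge j$, which is i.i.d.\ with $\prob(X_1=\ell\mid X_1\ge j)=h_{\ell\vert j}$, $\ell\ge j$; in particular a generic term of it equals $j$ with probability $h_j$, equals $j+1$ with probability $h_{j+1\vert j}$, and exceeds $j+1$ with probability $\overline{F}(j+1\vert j)$. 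In that thinned sequence, $(V^1_j,V^0_{j+1})$ is simply the number of $j$'s and the number of $(j+1)$'s observed before the first value exceeding $j+1$, i.e.\ the number of failures of two distinct types before the first success, which by definition follows $\dgeom(h_j,h_{j+1\vert j})$.

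An alternative, closer to the inductive argument that will be needed in Proposition \ref{theorem_dist_conjunta}, is direct summation. By the recurrence \eqref{eq_relac_k+1_k}, $\{V^1_j=a,\,V^0_{j+1}=b\}=\bigcup_{a_0+a_1=a}\{V^0_j=a_0,\,A^1_j=a_1,\,V^0_{j+1}=b\}$, and $V^0_j=A^0_j$ is independent of $(A^1_j,V^0_{j+1})=(A^1_j,A^0_{j+1})$ by the independent sub-vector decomposition of $\mathbf{A}_{j,k}$ recorded above. Substituting $\prob(V^0_j=a_0)=h_j^{a_0}(1-h_j)$ from Lemma \ref{lemma_Stepanov} and $\prob(A^1_j=a_1,V^0_{j+1}=b)$ from Proposition \ref{prop_xi} (treating $b=0$ separately, where the event reduces to $\{A^0_j=a,\,A^0_{j+1}=0\}$), the sum over $a_0+a_1=a$ collapses via the hockey-stick identity $\sum_{a_1=0}^{a}\binom{a_1+b-1}{b-1}=\binom{a+b}{a}$, a particular case of Lemma \ref{lemma_combinatoria}. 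The delicate point in either approach is the bookkeeping: one must check that every occurrence of $j$ and of $j+1$ is counted exactly once as the running maximum crosses from $\le j$ to $j+1$, and that the count accumulated while the maximum is $\le j$ (namely $A^0_j$) is independent of the counts accumulated once the maximum equals $j+1$ (namely $A^1_j$ and $A^0_{j+1}$); in the probabilistic route this independence is the regeneration of the thinned i.i.d.\ sequence at its first value exceeding $j$, while in the summation route it is exactly the content of the claimed sub-vector decomposition. Everything else is routine.
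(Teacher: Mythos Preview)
Your proposal is correct. Your second, ``alternative'' route is precisely the paper's proof: decompose via \eqref{eq_relac_k+1_k}, invoke the independence of $V^0_j$ from $(A^1_j,V^0_{j+1})$, plug in Lemma \ref{lemma_Stepanov} and Proposition \ref{prop_xi}, and collapse the sum with the hockey-stick identity (Lemma \ref{lemma_combinatoria}). Your primary route---expressing $V^1_j$ and $V^0_{j+1}$ as the counts of $j$'s and $(j+1)$'s among $X_1,\ldots,X_{\tilde L_{j+1}-1}$ and then thinning out observations below $j$---is a genuinely different and more direct argument: it identifies $(V^1_j,V^0_{j+1})$ with the multinomial failure counts before a first success in an i.i.d.\ three-category trial, which is the very definition of $\dgeom(h_j,h_{j+1\vert j})$. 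This sidesteps Proposition \ref{prop_xi} and the combinatorial identity entirely. The trade-off is that the paper's summation approach, while longer here, is the template for the inductive step in Proposition \ref{theorem_dist_conjunta}, whereas your thinning argument actually generalizes just as easily (the same reasoning with $\tilde L_{j+k}$ gives the result for general $k$ in one stroke), so arguably it makes the induction in Proposition \ref{theorem_dist_conjunta} unnecessary.
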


The proof of Proposition \ref{theorem_dist_conjunta} below relies on constructing an inductive argument to derive the distribution of the vector $(V^k_j, \dots, V^0_{j+k})$ for any $k$. The core idea is to split the probability into two independent components: one associated with the values observed before $j+k$ becomes a record, and the other corresponding to the values after that. Specifically, these two parts involve the random variables $V_j^{k-1}, \dots, V_{j+k-1}^0$ and $A_j^k, \dots, A_{j+k-1}^1, V^0_{j+k}$, respectively. 

\begin{prop}\label{theorem_dist_conjunta}
Let $j,k\in\Z_+$ such that $j+k\le r_n$. Then, $(V^k_j,\dots,V^0_{j+k})\sim\mathrm{Geom}^*(h_j, h_{j+1 \vert j}, \dots, h_{j+k \vert j})$.
\begin{proof}
We proceed by induction on $k$. For $k=0,1$, the result follows from Lemma \ref{lemma_Stepanov} and Proposition \ref{prop_k_1}. Now, assume that the result holds for $0,1,\ldots,k-1$ and let us prove that it also holds for $k$. Let $k\ge2$ and $a_{j+i} \in \mathbb{Z}_+$ for all $i = 0, \dots, k$. We aim to prove that
\begin{equation}
\prob(V^k_j = a_j, V^{k-1}_{j+1} = a_{j+1}, \dots, V^0_{j+k} = a_{j+k}) 
= h_j^{a_j}\ h_{j+1 \vert j}^{a_{j+1}} \cdots h_{j+k \vert j}^{a_{j+k}}\ \overline{F}(j+k \vert j)\ \binom{a_j + a_{j+1} + \cdots + a_{j+k}}{a_j, a_{j+1}, \dots, a_{j+k}}. \label{eq_prob_k_gen}
\end{equation}

If $a_{j+k} = 0$, the formula follows immediately from the induction hypothesis, since $V^{k-i}_{j+i} = V^{k-i-1}_{j+i}$ for all $i = 0, \dots, k-1$, thereby yielding
\begin{align*}
\prob(&V^k_j = a_j, V^{k-1}_{j+1} = a_{j+1}, \dots, V^1_{j+k-1} = a_{j+k-1}, V^0_{j+k} = 0) \\
&= \prob(V_j^{k-1} = a_j, V_{j+1}^{k-2} = a_{j+1}, \dots, V^0_{j+k-1} = a_{j+k-1}, V_{j+k}^0 = 0) \\
&= h_j^{a_j}\ h_{j+1 \vert j}^{a_{j+1}} \cdots h_{j+k-1 \vert j}^{a_{j+k-1}}\ \overline{F}(j+k-1 \vert j)\ \binom{a_j + a_{j+1} + \cdots + a_{j+k-1}}{a_j, a_{j+1}, \dots, a_{j+k-1}}\ (1 - h_{j+k}),
\end{align*}
which matches \eqref{eq_prob_k_gen}. 

Now, consider the case $a_{j+k} \geq 1$. Applying \eqref{eq_relac_k+1_k}, we proceed as follows: for each $i = 0, \dots, k-1$, fix $V^{k-i-1}_{j+i} = a'_{j+i}$ and $A_{j+i}^{k-i} = b_{j+i}$. Recall that  $V^{k-i-1}_{j+i}$ represents the number of times that the value $j+i$ occurs before $j+k$ becomes a record, while $A_{j+i}^{k-i}$ counts the occurrences of $j+i$ while $j+k$ is the current record. By the induction hypothesis, the probability of the event $\{V_{j}^{k-1} = a'_j,\ldots,V_{j+k-1}^{0} = a'_{j+k-1}\}$ is given by
\begin{equation}
h_j^{a'_j} \cdots h_{j+k-1 \vert j}^{a'_{j+k-1}}\ \overline{F}(j+k-1 \vert j)\ \binom{a'_j + \cdots + a'_{j+k-1}}{a'_j, \dots, a'_{j+k-1}}. \label{j}
\end{equation}
Moreover, the probability of observing the record $j+k$, obtaining the remaining $b_{j+i}$ values for $j+i$ (for $i=0,\dots,k-1$), and observing $a_{j+k}-1$ additional occurrences of $j+k$ as a weak record is, by Proposition \ref{prop_xi},
\begin{equation}
h_{j+k}\ h_j^{b_j} \cdots h_{j+k-1 \vert j}^{b_{j+k-1}}\ h_{j+k \vert j}^{a_{j+k}-1}\ \overline{F}(j+k \vert j)\ \binom{b_j + \cdots + b_{j+k-1} + a_{j+k} - 1}{b_j, \dots, b_{j+k-1}, a_{j+k}-1}. \label{jj}
\end{equation}
Combining \eqref{j} and \eqref{jj} over all possible combinations of $a'_{j+i}$, we obtain
\begin{align}
\prob(&V^k_j = a_j, V^{k-1}_{j+1} = a_{j+1}, \dots, V^0_{j+k} = a_{j+k}) \nonumber \\
&= h_j^{a_j} \cdots h_{j+k-1 \vert j}^{a_{j+k-1}}\ h_{j+k \vert j}^{a_{j+k}}\ \overline{F}(j+k \vert j)  \nonumber \\
&\quad \times \sum_{a'_{j+k-1} + b_{j+k-1} = a_{j+k-1}} \cdots \sum_{a'_j + b_j = a_j} \binom{a'_j + \cdots + a'_{j+k-1}}{a'_j, \dots, a'_{j+k-1}}\ \binom{b_j + \cdots + b_{j+k-1} + a_{j+k} - 1}{b_j, \dots, b_{j+k-1}, a_{j+k}-1}. \label{jjj}
\end{align}
Finally, by applying Lemma \ref{lemma_combinatoria}, we observe that \eqref{jjj} simplifies to \eqref{eq_prob_k_gen}, thus completing the proof.
\end{proof}
\end{prop}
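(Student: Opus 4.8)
The plan is to establish the claim by induction on $k$, computing the joint mass function of $(V_j^k,\ldots,V_{j+k}^0)$ directly and verifying that it equals the $\mathrm{Geom}^*(h_j,h_{j+1\vert j},\ldots,h_{j+k\vert j})$ form $h_j^{a_j}\cdots h_{j+k\vert j}^{a_{j+k}}\,\overline{F}(j+k\vert j)\binom{a_j+\cdots+a_{j+k}}{a_j,\ldots,a_{j+k}}$. The base cases are already in hand: for $k=0$ the vector collapses to $A_j^0$, whose geometric law $\prob(A_j^0=a)=h_j^a(1-h_j)$ comes from Lemma \ref{lemma_Stepanov}, and $k=1$ is precisely Proposition \ref{prop_k_1}. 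I would then assume the formula for all orders $0,1,\ldots,k-1$ and prove it for $k\ge2$.

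The engine of the inductive step is the recurrence $V_{j+i}^{k-i}=A_{j+i}^{k-i}+V_{j+i}^{k-i-1}$ from \eqref{eq_relac_k+1_k}, read for each $i=0,\ldots,k-1$. Its probabilistic meaning is a time split: $V_{j+i}^{k-i-1}$ counts the appearances of $j+i$ while the running maximum is still at most $j+k-1$, whereas $A_{j+i}^{k-i}$ counts those appearances of $j+i$ that occur while the current maximum equals exactly $j+k$. I would dispose of $a_{j+k}=0$ first: then $j+k$ is never a record, every $A_{j+i}^{k-i}$ vanishes so that $V_{j+i}^{k-i}=V_{j+i}^{k-i-1}$, and the probability reduces to the order-$(k-1)$ hypothesis times $\prob(A_{j+k}^0=0)=1-h_{j+k}$, which matches the target.

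For $a_{j+k}\ge1$, I would condition on the first epoch at which the maximum reaches $j+k$ and split the vector into a ``pre-record'' block $(V_j^{k-1},\ldots,V_{j+k-1}^0)$, recorded while the maximum is below $j+k$, and a ``during-record'' block $(A_j^k,\ldots,A_{j+k-1}^1,V_{j+k}^0)$, recorded while the maximum is pinned at $j+k$. Writing each $a_{j+i}=a'_{j+i}+b_{j+i}$ as pre-record count plus during-record count and summing over all splits, the first block is governed by the induction hypothesis and the second by Proposition \ref{prop_xi} with $\ell=k$. Two reductions then finish the computation: the hazard powers recombine through $a'_{j+i}+b_{j+i}=a_{j+i}$ together with the telescoping identity $h_{j+k}\,\overline{F}(j+k-1\vert j)=h_{j+k\vert j}$, collapsing to $h_j^{a_j}\cdots h_{j+k\vert j}^{a_{j+k}}\,\overline{F}(j+k\vert j)$; and the resulting double sum of products of two multinomial coefficients collapses to the single multinomial $\binom{a_j+\cdots+a_{j+k}}{a_j,\ldots,a_{j+k}}$ via the Vandermonde-type convolution of Lemma \ref{lemma_combinatoria}.

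The step I expect to be the crux is justifying that the pre-record and during-record blocks are genuinely independent, which is what allows the joint probability to factor as a product before summing over splits. This rests on a regenerative property of the i.i.d. record process: conditioned on $j+k$ becoming the running maximum, the observations gathered beforehand (all of value at most $j+k-1$) are independent of those gathered while the maximum stays at $j+k$, and on each segment the relevant counts depend only on the conditional hazards $h_{\cdot\vert j}$. I would make this precise by invoking the mutually-independent sub-vector decomposition of $\mathbf{A}_{j,k}$ recorded just before Proposition \ref{prop_xi}, which already isolates $(A_j^k,\ldots,A_{j+k-1}^1,A_{j+k}^0)$ from the lower-order variables that assemble into $(V_j^{k-1},\ldots,V_{j+k-1}^0)$; once this independence and Lemma \ref{lemma_combinatoria} are granted, the remaining bookkeeping is routine.
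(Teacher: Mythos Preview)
Your proposal is correct and follows essentially the same approach as the paper's proof: induction on $k$ with base cases from Lemma~\ref{lemma_Stepanov} and Proposition~\ref{prop_k_1}, the case $a_{j+k}=0$ handled by collapsing $V_{j+i}^{k-i}$ to $V_{j+i}^{k-i-1}$, and the case $a_{j+k}\ge1$ handled by splitting each $a_{j+i}$ into a pre-record part (controlled by the induction hypothesis) and a during-record part (controlled by Proposition~\ref{prop_xi}), then summing over splits and applying Lemma~\ref{lemma_combinatoria}. If anything, you are slightly more explicit than the paper about the independence of the two blocks, correctly locating its justification in the sub-vector decomposition of $\mathbf{A}_{j,k}$ stated just before Proposition~\ref{prop_xi}.
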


From the distribution of the vector $(V^k_j, \dots, V^0_{j+k})$ we derive the following result which is crucial in the proof of Theorem \ref{theorem_rhat}. 

\begin{prop}\label{corollary_n_jk} Let $k\in\N$ and $j\in\Z_+$ such that $j+k\le r_n$. We have that
\begin{enumerate}[a)]
	\item \label{corollary_n_jk_a}$$
	\left(V_j^k,\sum_{\ell=1}^k V_{j+\ell}^{k-\ell}\right)\sim\dgeom(h_j,1-d_{j,k}).
	$$
	\item The random variable $N_j^k$ follows a Geometric distribution (starting at $1$) with parameter $d_{j,k}-h_j$, that is, \label{corollary_n_jk_b}
	$$
	N_j^k\sim\dge(d_{j,k}-h_j).
	$$
\end{enumerate}
\begin{proof}
Both statements follow directly from Proposition \ref{theorem_dist_conjunta} and the properties of the multidimensional Geome\-tric distribution (see Lemma \ref{propdistrgeom}).
\end{proof}
\end{prop}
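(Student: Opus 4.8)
The plan is to deduce both statements directly from the joint law established in Proposition~\ref{theorem_dist_conjunta}, namely $(V^k_j,\dots,V^0_{j+k})\sim\mathrm{Geom}^*(h_j,h_{j+1\vert j},\dots,h_{j+k\vert j})$, together with the closure of the multidimensional Geometric distribution under merging of coordinates recorded in Lemma~\ref{propdistrgeom}. The one preliminary computation I would make explicit is the identity $1-d_{j,k}=\sum_{i=1}^k h_{j+i\vert j}$, which is immediate from the definition of $d_{j,k}$; it follows that the ``success probability'' of the $\mathrm{Geom}^*(h_j,h_{j+1\vert j},\dots,h_{j+k\vert j})$ law equals $1-h_j-\sum_{i=1}^k h_{j+i\vert j}=d_{j,k}-h_j\ (=\overline F(j+k\vert j))$.

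For part~\ref{corollary_n_jk_a} I would merge the last $k$ coordinates of the vector into a single one. By the aggregation property in Lemma~\ref{propdistrgeom}, collapsing the coordinates with parameters $h_{j+1\vert j},\dots,h_{j+k\vert j}$ yields a bivariate $\mathrm{Geom}^*$ with parameters $h_j$ and $\sum_{i=1}^k h_{j+i\vert j}=1-d_{j,k}$, and the collapsed coordinate is exactly $\sum_{\ell=1}^k V^{k-\ell}_{j+\ell}$; hence $\bigl(V_j^k,\sum_{\ell=1}^k V_{j+\ell}^{k-\ell}\bigr)\sim\dgeom(h_j,1-d_{j,k})$. For part~\ref{corollary_n_jk_b}, note that $N_j^k=1+\sum_{j'=0}^k V_{j+j'}^{k-j'}=1+V_j^k+\sum_{\ell=1}^k V_{j+\ell}^{k-\ell}$, so $N_j^k-1$ is the total count of coordinates (failures of all types) before the first success. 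Merging all $k+1$ coordinates, Lemma~\ref{propdistrgeom} gives that $N_j^k-1$ is an ordinary Geometric variable on $\Z_+$ with success probability $d_{j,k}-h_j$; shifting by $1$ gives $N_j^k\sim\dge(d_{j,k}-h_j)$ in the ``starting at $1$'' convention.

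I do not anticipate a real obstacle here: the proof is essentially bookkeeping on top of Proposition~\ref{theorem_dist_conjunta}. The only points that need care are (i) verifying that the coordinate obtained by merging in part~\ref{corollary_n_jk_a} is literally $\sum_{\ell=1}^k V^{k-\ell}_{j+\ell}$ and not some reindexed variant, and (ii) keeping straight whether the Geometric produced by Lemma~\ref{propdistrgeom} is supported on $\Z_+$ or on $\{1,2,\dots\}$, which is resolved by the explicit $+1$ appearing in the definition of $N_j^k$. Everything else is a direct appeal to Proposition~\ref{theorem_dist_conjunta} and Lemma~\ref{propdistrgeom}.
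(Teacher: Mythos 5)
Your proposal is correct and follows exactly the route the paper intends: apply Proposition \ref{theorem_dist_conjunta}, note $1-d_{j,k}=\sum_{i=1}^k h_{j+i\vert j}$, and aggregate coordinates via Lemma \ref{propdistrgeom}, with the explicit $+1$ in the definition of $N_j^k$ handling the support convention. The only cosmetic remark is that for part b) you aggregate \emph{all} $k+1$ coordinates, which is a trivially further use of the same failure-merging argument behind Lemma \ref{propdistrgeom} (or, equivalently, follows from part a) by summing the two components of the bivariate $\mathrm{Geom}^*$), so no genuine gap remains.
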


Proposition \ref{corollary_n_jk} is the key result for proving Theorem \ref{theorem_rhat}.
\begin{proof} [Proof of Theorem \ref{theorem_rhat}] Given \( q = a/b \), where \( a, b \in \mathbb{Z}_+ \) and \( a < b \), with \( \gcd(a, b) = 1 \), we can express \( q \) as
\[
q = \frac{\ell_i}{1 + \ell_i + m_i},
\]
where
\begin{equation}
(\ell_i, m_i) = (ai, -1 + (b - a)i), \quad i \in \mathbb{N}. \label{eq_solutions}
\end{equation}
These are all the possible solutions for \( \ell_i, m_i \in \mathbb{Z}_+ \). To verify this, observe that finding integers \( \ell \) and \( m \) such that \( q = \frac{\ell}{1 + \ell + m} \) is equivalent to solving the linear Diophantine equation
\[
am - (b - a)\ell = -a,
\]
for \( \ell \) and \( m \), whose general solution is given by \eqref{eq_solutions} (see Lemma \ref{lemma_diofanto} in the Appendix). Now, using Proposition \ref{corollary_n_jk} \ref{corollary_n_jk_a}, we may compute the probability distribution of the estimator \(\hat{h}_{j,k}\). Specifically,
{\allowdisplaybreaks
\begin{align*}
\mathbb{P}(\hat{h}_{j,k} = q) &= \sum_{i=1}^\infty \mathbb{P}\left( \hat{h}_{j,k} = \frac{\ell_i}{1 + \ell_i + m_i}, V^{k-1}_{j+1} + \cdots + V^0_{j+k} = m_i \right)\\
&= \sum_{i=1}^\infty \mathbb{P}\left( V^k_j = \ell_i, V^{k-1}_{j+1} + \cdots + V^0_{j+k} = m_i \right)\\
&= (d_{j,k} - h_j) \sum_{i=1}^\infty \binom{bi-1}{ai}\ h_j^{ai} (1 - d_{j,k})^{(b-a)i-1}.
\end{align*}}
To derive the cumulative distribution function \( G_{j,k} \), we proceed similarly. For any \( x \in [0, 1) \), we have
{\allowdisplaybreaks
\begin{align*}
G_{j,k}(x) &= \sum_{m=0}^\infty \mathbb{P}(\hat{h}_{j,k} \leq x, V^{k-1}_{j+1} + \cdots + V^0_{j+k} = m)\\
&= \sum_{m=0}^\infty \mathbb{P}\left( V^k_j \leq \left\lfloor \frac{(1+m)x}{1-x} \right\rfloor, V^{k-1}_{j+1} + \cdots + V^0_{j+k} = m \right)\\
&= (d_{j,k} - h_j) \sum_{m=0}^\infty \sum_{\ell=0}^{\left\lfloor \frac{(1+m)x}{1-x} \right\rfloor} \binom{m+\ell}{\ell}\ h_j^\ell (1 - d_{j,k})^m.
\end{align*}}
This concludes the proof of Theorem \ref{theorem_rhat}.
\end{proof}
\subsection{Properties of the estimator $\hat{h}_{j,k}$ for fixed $k$}

Theorem \ref{theorem_rhat} allows us to study certain properties of the estimator, such as its expectation, variance, and mean squared error (MSE), exactly rather than asymptotically. These computations provide insight into the practical performance of the estimator, as they enable a deeper understanding of its dependence on the parameter $k$.

In the proof of the next proposition, we  use two identities involving infinite series and integrals, which are stated and proved in Lemma \ref{lemma_suma_series_potencias} in the Appendix. Let $\li_2(x)=\sum_{n\ge 1} n^{-2}x^n$, for $x\in[-1,1]$, be the dilogarithm function.

\begin{prop}\label{prop_esperanza} 
Let $k \in \mathbb{N}$ and $j \in \mathbb{Z}_+$ such that $j+k\le r_n$. Then:
\begin{enumerate}[a)]
\item \label{prop_esperanza_apartado_a} 
\[
\mathbb{E} (\hat{h}_{j,k}) = \frac{(d_{j,k} - h_j)h_j}{(1 + h_j - d_{j,k})^2} \log{(d_{j,k} - h_j)} + \frac{h_j}{1 + h_j - d_{j,k}}.
\]
\item 
\begin{align*}
\mathrm{Var}(\hat{h}_{j,k}) &= \frac{(d_{j,k}-h_j)(h_j+d_{j,k}-1)h_j}{(1 + h_j - d_{j,k})^3} \mathrm{Li}_2(1+h_j-d_{j,k}) - \frac{(d_{j,k}-h_j)^2 h_j^2}{(1 + h_j - d_{j,k})^4} (\log{(d_{j,k}-h_j)})^2 \\
&\qquad - \frac{(d_{j,k}-h_j)(1-d_{j,k})h_j}{(1+h_j-d_{j,k})^3} \log{(d_{j,k}-h_j)}.
\end{align*}
\end{enumerate}

Consequently, the mean squared error of the estimator $\hat{h}_{j,k}$ is
\begin{align}\label{formula_MSE}
\mathrm{MSE}(\hat{h}_{j,k}) &= \frac{(d_{j,k}-h_j)(h_j+d_{j,k}-1)h_j}{(1 + h_j - d_{j,k})^3} \mathrm{Li}_2(1+h_j-d_{j,k}) + \frac{h_j^2(d_{j,k}-h_j)^2}{(1+h_j-d_{j,k})^2} \nonumber \\
&\qquad + \frac{(d_{j,k}-h_j)(2h_j(d_{j,k}-h_j)-(1-d_{j,k}))h_j}{(1 + h_j - d_{j,k})^2} \log{(d_{j,k}-h_j)}.
\end{align}
\end{prop}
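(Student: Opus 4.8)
The plan is to compute $\mathbb{E}(\hat h_{j,k})$ and $\mathbb{E}(\hat h_{j,k}^2)$ directly from the joint law established in Proposition~\ref{corollary_n_jk}~\ref{corollary_n_jk_a}, namely that $\bigl(V_j^k,\ \sum_{\ell=1}^k V_{j+\ell}^{k-\ell}\bigr)\sim\dgeom(h_j,1-d_{j,k})$, and then obtain the variance and the MSE by algebra together with the bias term $\mathbb{E}(\hat h_{j,k})-h_j$. Write $p:=h_j$, $r:=1-d_{j,k}$ and $s:=d_{j,k}-h_j=1-p-r$, so that $p+r+s=1$ and the pair $(L,M):=\bigl(V_j^k,\sum_{\ell\ge1}V_{j+\ell}^{k-\ell}\bigr)$ satisfies $\mathbb{P}(L=\ell,M=m)=s\binom{\ell+m}{\ell}p^\ell r^m$ for $\ell,m\in\mathbb{Z}_+$; then $\hat h_{j,k}=L/(1+L+M)$. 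Hence
\begin{equation*}
\mathbb{E}(\hat h_{j,k}) = s\sum_{\ell,m\ge0}\frac{\ell}{1+\ell+m}\binom{\ell+m}{\ell}p^\ell r^m,\qquad
\mathbb{E}(\hat h_{j,k}^2) = s\sum_{\ell,m\ge0}\frac{\ell^2}{(1+\ell+m)^2}\binom{\ell+m}{\ell}p^\ell r^m.
\end{equation*}

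The key device for evaluating these sums is to introduce the homogenizing variable: for $0<p+r<1$ set $F(t):=s\sum_{\ell,m\ge0}\binom{\ell+m}{\ell}(tp)^\ell r^m$, which by the negative–binomial identity equals $s\big/(1-tp-r)$; repeated application of $t\,\partial_t$ brings down factors of $\ell$, while integrating $t\mapsto\int_0^1 t^{n-1}(\cdot)\,dt$ brings down factors $1/(\ell+\text{const})$. Concretely, $\sum_{\ell,m}\frac{\ell}{1+\ell+m}\binom{\ell+m}{\ell}p^\ell r^m = \int_0^1 \big(t\,\partial_t F(t)\big)/s\,dt$ after the shift by $1$ in the denominator is absorbed by the extra power of $t$, and similarly the second moment produces a double integral. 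Carrying this out reduces everything to elementary integrals of rational functions in $t$ plus two non-elementary ones, $\int_0^1\frac{dt}{1-ct}=-\frac1c\log(1-c)$ and $\int_0^1\frac{-\log(1-ct)}{t}\,dt=\operatorname{Li}_2(c)$ with $c=p+r=1-s$; these are exactly the two identities collected in Lemma~\ref{lemma_suma_series_potencias}, and $1-c=s=d_{j,k}-h_j$, $c=p+r=1+h_j-d_{j,k}$, matching the arguments $\log(d_{j,k}-h_j)$ and $\operatorname{Li}_2(1+h_j-d_{j,k})$ that appear in the statement. Once $\mathbb{E}(\hat h_{j,k})$ and $\mathbb{E}(\hat h_{j,k}^2)$ are in closed form, $\mathrm{Var}=\mathbb{E}(\hat h_{j,k}^2)-\mathbb{E}(\hat h_{j,k})^2$ gives part b), and $\mathrm{MSE}=\mathrm{Var}+(\mathbb{E}(\hat h_{j,k})-h_j)^2$ gives \eqref{formula_MSE} after collecting the $\log$, $\log^2$, $\operatorname{Li}_2$ and rational terms.

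I expect the main obstacle to be purely computational bookkeeping rather than conceptual: organizing the double sum for the second moment so that the two successive integrations are legitimate (uniform convergence / Tonelli on the nonnegative series for $|t|\le1$, which holds since $p+r<1$), and then simplifying the resulting pile of rational fractions in $p,r$ over the common denominator $(1+h_j-d_{j,k})^k$ for $k=2,3,4$ so that the coefficients line up exactly with the stated expressions. A secondary point worth checking explicitly is the boundary behaviour: the formulas must remain valid (by continuity) when $h_j=0$, where $\hat h_{j,k}\equiv0$ and every term on the right-hand side vanishes, and the limiting cases $d_{j,k}-h_j\to0$ or $\to1$, which serve as useful sanity checks on the algebra. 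Since the bias $\mathbb{E}(\hat h_{j,k})-h_j$ is needed both for the MSE and as an independent quantity of interest, I would isolate it first: from part a), $\mathbb{E}(\hat h_{j,k})-h_j = \frac{(d_{j,k}-h_j)h_j}{(1+h_j-d_{j,k})^2}\log(d_{j,k}-h_j) - \frac{h_j(h_j-d_{j,k})}{1+h_j-d_{j,k}}$, whose square feeds directly into \eqref{formula_MSE}.
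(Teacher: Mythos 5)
Your proposal is correct in substance, but it takes a genuinely different route from the paper. The paper derives both moments from the exact CDF $G_{j,k}$ of Theorem \ref{theorem_rhat}, writing $\mathbb{E}(\hat h_{j,k})=\int_0^1(1-G_{j,k}(x))\,\mathrm{d}x$ and $\mathbb{E}(\hat h_{j,k}^2)=\int_0^1 2x\,(1-G_{j,k}(x))\,\mathrm{d}x$, which entails some bookkeeping with the floor function, and then converts the inner series over $\ell$ into integrals via Lemma \ref{lemma_suma_series_potencias} before summing over $m$ and evaluating the resulting logarithm/dilogarithm integrals. You instead compute the moments directly from the joint law of $\bigl(V_j^k,\sum_{\ell\ge1}V_{j+\ell}^{k-\ell}\bigr)$ given by Proposition \ref{corollary_n_jk}, bypassing the CDF and the floor-function step, and reduce everything to the closed form of the bivariate geometric series together with the representations $\frac{1}{1+\ell+m}=\int_0^1 t^{\ell+m}\,\mathrm{d}t$ and $\frac{1}{(1+\ell+m)^2}=\int_0^1\!\int_0^1 (tu)^{\ell+m}\,\mathrm{d}t\,\mathrm{d}u$; the same two special integrals ($\log$ at $d_{j,k}-h_j$ and $\mathrm{Li}_2$ at $c=1+h_j-d_{j,k}$) appear at the end, so the analytic core matches the paper's, while your organization is somewhat more direct and makes the Tonelli justification immediate since all terms are nonnegative. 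One bookkeeping slip to fix: as written, $F(t)=s\sum\binom{\ell+m}{\ell}(tp)^\ell r^m=s/(1-tp-r)$ attaches $t$ only to the index $\ell$, so $\int_0^1 t\,\partial_t F\,\mathrm{d}t$ produces factors $\ell/(\ell+1)$, not the required $\ell/(1+\ell+m)$; because the denominator involves both indices you must homogenize in both variables, i.e.\ work with $s\sum\binom{\ell+m}{\ell}(tp)^\ell(tr)^m=s/(1-t(p+r))$, which is evidently what you intend since your final integrals are in $c=p+r$. With that correction the first moment is $s\int_0^1 tp\,(1-ct)^{-2}\,\mathrm{d}t$, which evaluates exactly to the formula in part a), the second moment follows the same pattern with $(x\partial_x)^2$ applied to the geometric series and the double integral (equivalently a single integral against $-\log v$), and the passage to $\mathrm{Var}$ and $\mathrm{MSE}$ is the routine algebra you describe; your bias expression also agrees with the paper's after simplification.
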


\begin{proof}
\begin{enumerate}[a)]
\item The expectation is calculated directly from the cumulative distribution function:
{\allowdisplaybreaks
\begin{align*}
\espe(\hat{h}_{j,k}) &= \int_0^1 (1-G_{j,k}(x))\dif x\nonumber\\
&= 1 - (d_{j,k}-h_j)\sum_{m= 0}^\infty (1-d_{j,k})^m \sum_{i=0}^\infty \int_{\frac{i}{i+m+1}}^{\frac{i+1}{i+m+2}}\sum_{l=0}^{i} {m+\ell\choose \ell}\ h_j^\ell\dif x\nonumber\\
&= 1 - (d_{j,k}-h_j)\sum_{m= 0}^\infty (m+1)(1-d_{j,k})^m \sum_{\ell=0}^{\infty} \left[{m+\ell\choose \ell}\ h_j^\ell\sum_{i=\ell}^\infty \left(\frac{1}{i+m+1}-\frac{1}{i+m+2}\right)\right]\nonumber\\
&= 1 - (d_{j,k}-h_j)\sum_{m= 0}^\infty (m+1)(1-d_{j,k})^m \sum_{\ell=0}^{\infty} {m+\ell\choose \ell}\ h_j^\ell\frac{1}{\ell+m+1}.\nonumber\\
\end{align*}}Now, applying Lemma \ref{lemma_suma_series_potencias} \ref{lemma_ssp_aparatado_a}, we obtain the desired result.
{\allowdisplaybreaks
\begin{align}
\espe(\hat{h}_{j,k})
&= 1 - (d_{j,k}-h_j)\sum_{m= 0}^\infty (m+1)\frac{(1-d_{j,k})^m}{h_j^{m+1}}\int_0^{h_j}\frac{u^m}{(1-u)^{m+1}}\dif u\nonumber\\
&= 1-\frac{d_{j,k}-h_j}{h_j}\int_0^{h_j} \frac{1}{1-u}\sum_{m=0}^\infty (m+1)\left(\frac{(1-d_{j,k})u}{h_j(1-u)}\right)^m\dif u\nonumber\\
&= 1-\frac{d_{j,k}-h_j}{h_j}\int_0^{h_j}\frac{1-u}{(1-\frac{1+h_j-d_{j,k}}{h_j}u)^2}\dif u\nonumber\\
&= \frac{(d_{j,k} - h_j)h_j}{(1 +h_j - d_{j,k})^2}\log{(d_{j,k} - h_j)} + \frac{h_j}{1+h_j -d_{j,k}}.\nonumber
\end{align}
}
\item As in part \ref{prop_esperanza_apartado_a},  the cumulative distribution function allows us to compute the second moment.{\allowdisplaybreaks
\begin{align*}
\espe (\hat{h}^2_{j,k})  &= \int_0^1 2x (1-G_{j,k}(x)) \dif x \nonumber\\
&= 1 - (d_{j,k}-h_j) \sum_{m=0}^\infty (1-d_{j,k})^m \sum_{i=0}^\infty \sum_{\ell=0}^i \left[{m+\ell \choose \ell}\ h_j^\ell  \left( \frac{(i+1)^2}{(i+m+2)^2} - \frac{i^2}{(i+m+1)^2} \right) \right] \nonumber\\
&= 1 - (d_{j,k}-h_j) \sum_{m=0}^\infty (1-d_{j,k})^m \sum_{\ell=0}^\infty \left[{m+\ell \choose \ell}\ h_j^\ell   \sum_{i=\ell}^\infty \left( \frac{(i+1)^2}{(i+m+2)^2} - \frac{i^2}{(i+m+1)^2} \right) \right] \nonumber\\
&= 1 - (d_{j,k}-h_j) \sum_{m=0}^\infty (1-d_{j,k})^m \sum_{\ell=0}^\infty {m+\ell \choose \ell}\ h_j^\ell \left( 1 - \frac{\ell^2}{(\ell+m+1)^2} \right) \nonumber\\
&= 1 - \left(G_{j,k}(1) - (d_{j,k}-h_j) \sum_{m=0}^\infty (1-d_{j,k})^m \sum_{\ell=0}^\infty {m+\ell \choose \ell}\ h_j^\ell \frac{\ell^2}{(\ell+m+1)^2} \right) \nonumber\\
&= (d_{j,k}-h_j) \sum_{m=0}^\infty (1-d_{j,k})^m \sum_{\ell=0}^\infty {m+\ell \choose \ell}\ h_j^\ell \frac{\ell^2}{(\ell+m+1)^2}.
\end{align*}}At this point, we apply Lemma \ref{lemma_suma_series_potencias} \ref{lemma_ssp_aparatado_b}, yielding
{\allowdisplaybreaks
\begin{align*}
\espe (\hat{h}^2_{j,k}) &= (d_{j,k}-h_j) \sum_{m=0}^\infty (1-d_{j,k})^m \frac{(m+1)}{h_j^{m+1}} \int_0^{h_j} \frac{u^{m+1}(\log{(h_j)} - \log{(u)})}{(1-u)^{m+3}} \dif u \nonumber\\
&\quad + (d_{j,k}-h_j) \sum_{m=0}^\infty (1-d_{j,k})^m \frac{(m+1)^2}{h_j^{m+1}} \int_0^{h_j} \frac{u^{m+2}(\log{(h_j)} - \log{(u)})}{(1-u)^{m+3}} \dif u \nonumber\\
&= \frac{d_{j,k}-h_j}{h_j} \int_0^{h_j} \frac{u(\log{(h_j)} - \log{(u)})}{(1-u)(1-\frac{1+h_j-d_{j,k}}{h_j}u)^2} \dif u  +\frac{d_{j,k}-h_j}{h^2_j} \int_0^{h_j} \frac{u^2(\log{(h_j)} - \log{(u)})}{(1-u)(1-\frac{1+h_j-d_{j,k}}{h_j}u)^3} \dif u \nonumber\\
&= \frac{(d_{j,k}-h_j)(h_j+d_{j,k}-1)h_j}{(1+h_j-d_{j,k})^3} \li_2(1+h_j-d_{j,k}) + \frac{(d_{j,k}-h_j)(2h_j+d_{j,k}-1)h_j}{(1+h_j-d_{j,k})^3} \log{(d_{j,k}-h_j)} \\
&\quad+ \frac{h_j^2}{(1+h_j-d_{j,k})^2}. \nonumber
\end{align*}
}
From here, the result follows directly from part \ref{prop_esperanza_apartado_a}.
\end{enumerate}
\end{proof}
\subsection{Asymptotic properties of $\hat{h}_{j,k}$ as $k\to\infty$}
Regarding the asymptotic properties of the estimator, let us return to its MSE (see Equation  \eqref{formula_MSE}). Since $d_{j,k}=h_j+\overline F(j+k\vert j)$, it is clear that $d_{j,k}\to h_j$ as $k\to\infty$. Thus, the right-hand side of \eqref{formula_MSE} converges to 0 as $k\to\infty$, implying that the MSE of $\hat h_{j,k}$ is small for large $k$. This result is consistent with the fact that larger values of $k$ include more data in the sample and, consequently, should lead to more accurate estimates. However, note that in order for $k\to\infty$, we must also let $n\to\infty$ because, for fixed $n$, the sample of $\delta$-records remains fixed once $k>r_n$. Therefore, we choose $n=n_k>j+k$. As our observations are nonnegative integers, this choice guarantees that $r_n\ge j+k$, ensuring that the assumptions of Proposition \ref{prop_esperanza} hold. The next result establishes the strong consistency of our estimator in this setting.

\begin{theorem}\label{as_convergence} Let $n=n_k>j+k$. Then, $\hat{h}_{j,k}\toas h_j$ and $\mathrm{MSE}(\hat h_{j,k})\to0$ as $k\to\infty$.
\begin{proof}
It follows from Equation \eqref{estimator_form_2} and the fact that the random variable $\tilde{L}_{j+k}\toas \infty$ as $k\to\infty$, since
$$
\hat{h}_{j,k} = \frac{\frac{1}{\tilde{L}_{j+k}}\sum\limits_{i=1}^{\tilde{L}_{j+k}} \mathbf{1}_{\{X_i=j\}}}{\frac{1}{\tilde{L}_{j+k}}\sum\limits_{i=1}^{\tilde{L}_{j+k}} \mathbf{1}_{\{X_i\ge j\}}}\toas \frac{\prob(X_1=j)}{\prob(X_1\ge j)}\;\text{ as }\; k\to \infty.
$$
Finally, since $0\le\hat{h}_{j,k}< 1$, almost sure convergence implies $L^2$ convergence, or equivalently, that $\mathrm{MSE}(\hat{h}_{j,k})\to 0$ as $k\to\infty$.
\end{proof}
\end{theorem}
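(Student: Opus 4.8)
The plan is to bypass the exact distribution entirely and argue directly from the alternative expression \eqref{estimator_form_2}. First note that the choice $n=n_k>j+k$ guarantees $r_{n_k}\ge n_k-1\ge j+k$, since the records form a strictly increasing sequence of nonnegative integers; hence \eqref{estimator_form_2} applies, and writing $N_k:=\tilde L_{j+k}$ we have
\[
\hat h_{j,k}=\frac{\frac1{N_k}\sum\limits_{i=1}^{N_k}\mathbf{1}_{\{X_i=j\}}}{\frac1{N_k}\sum\limits_{i=1}^{N_k}\mathbf{1}_{\{X_i\ge j\}}},
\]
so the whole statement reduces to a random-time-change argument for the two Cesàro averages in the numerator and denominator.

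The second step is to show $N_k\toas\infty$ as $k\to\infty$. Since the $X_i$ are i.i.d.\ with infinite support, $\limsup_i X_i=\infty$ almost surely; as $k\mapsto N_k$ is nondecreasing, if it remained bounded on an event of positive probability then only finitely many $X_i$ would exceed the thresholds $j+k$ there, which is a contradiction. Hence $N_k\to\infty$ a.s.

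The third step is the strong law of large numbers applied to the i.i.d.\ indicator sequences $\{\mathbf{1}_{\{X_i=j\}}\}_i$ and $\{\mathbf{1}_{\{X_i\ge j\}}\}_i$: on an event of probability one, $\frac1N\sum_{i=1}^N\mathbf{1}_{\{X_i=j\}}\to P(j)$ and $\frac1N\sum_{i=1}^N\mathbf{1}_{\{X_i\ge j\}}\to\overline{F}(j-1)$ as the deterministic index $N\to\infty$. Intersecting this event with $\{N_k\to\infty\}$ and substituting $N=N_k$ (composition of an a.s.\ convergent sequence with an a.s.\ divergent integer index) yields $\hat h_{j,k}\toas P(j)/\overline{F}(j-1)=h_j$; the limiting denominator $\overline{F}(j-1)>0$ because the support is infinite, so the ratio is well defined in the limit.

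Finally, since $0\le\hat h_{j,k}<1$ deterministically, the sequence $(\hat h_{j,k}-h_j)^2$ is uniformly bounded by $1$, so the bounded convergence theorem upgrades the almost sure convergence to $L^2$ convergence, which is precisely $\mathrm{MSE}(\hat h_{j,k})=\espe[(\hat h_{j,k}-h_j)^2]\to0$; in particular both the squared bias and the variance vanish, in agreement with Proposition \ref{prop_esperanza} once one observes that $d_{j,k}=h_j+\overline{F}(j+k\vert j)\to h_j$. The only genuinely delicate point is the composition of the SLLN limit with the random, a.s.\ divergent index $N_k$, together with the claim $N_k\toas\infty$; everything else is routine.
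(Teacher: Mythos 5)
Your proposal is correct and follows essentially the same route as the paper: expressing $\hat h_{j,k}$ via \eqref{estimator_form_2}, noting $\tilde L_{j+k}\toas\infty$, applying the SLLN along the random index, and using boundedness of $\hat h_{j,k}$ to pass from almost sure to $L^2$ convergence. Your elaborations (why $n_k>j+k$ forces $r_{n_k}\ge j+k$, and the explicit composition-with-random-index step) are just careful fillings-in of the steps the paper states more tersely.
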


\begin{example}[Geometric distribution]
In the case where the distribution $F$ is Geometric with parameter $p \in (0,1)$, we have that $h_j = p$ and $d_{j,k} = p + (1 - p)^{k+1}$ for all $j \in \mathbb{Z}_+$, $k \in \mathbb{N}$. Hence, from Proposition \ref{prop_esperanza} \ref{prop_esperanza_apartado_a}, we have that
\begin{equation}\label{espe_geo}
	\espe(\hat h_{j,k})=\frac{(k+1)(1-p)^{k+1}p}{(1-(1-p)^{k+1})^2}\log{(1-p)}+\frac{p}{1-(1-p)^{k+1}}.\end{equation}
The variance and MSE can be computed analogously.
For this distribution, the expected value of $\hat h_{j,k}$ does not depend on $j$. This is consistent with the fact that, in the Geometric distribution, the hazard rate $h_j$ is constant ($h_j$ equals the probability of success $p$). Note, however, that the MLE is biased. Figure \ref{figure_plot_esperanza} plots \eqref{espe_geo} as a function of $p$ (blue lines) and the identity function (black line) for several values of $k$, showing that, indeed, the MLE has a negative bias. The graph also shows that, as $k$ increases, \eqref{espe_geo} approaches $p$, as expected due to Theorem \ref{as_convergence}. 

\begin{figure}[h]
	\centering
	\includegraphics[scale = 0.6]{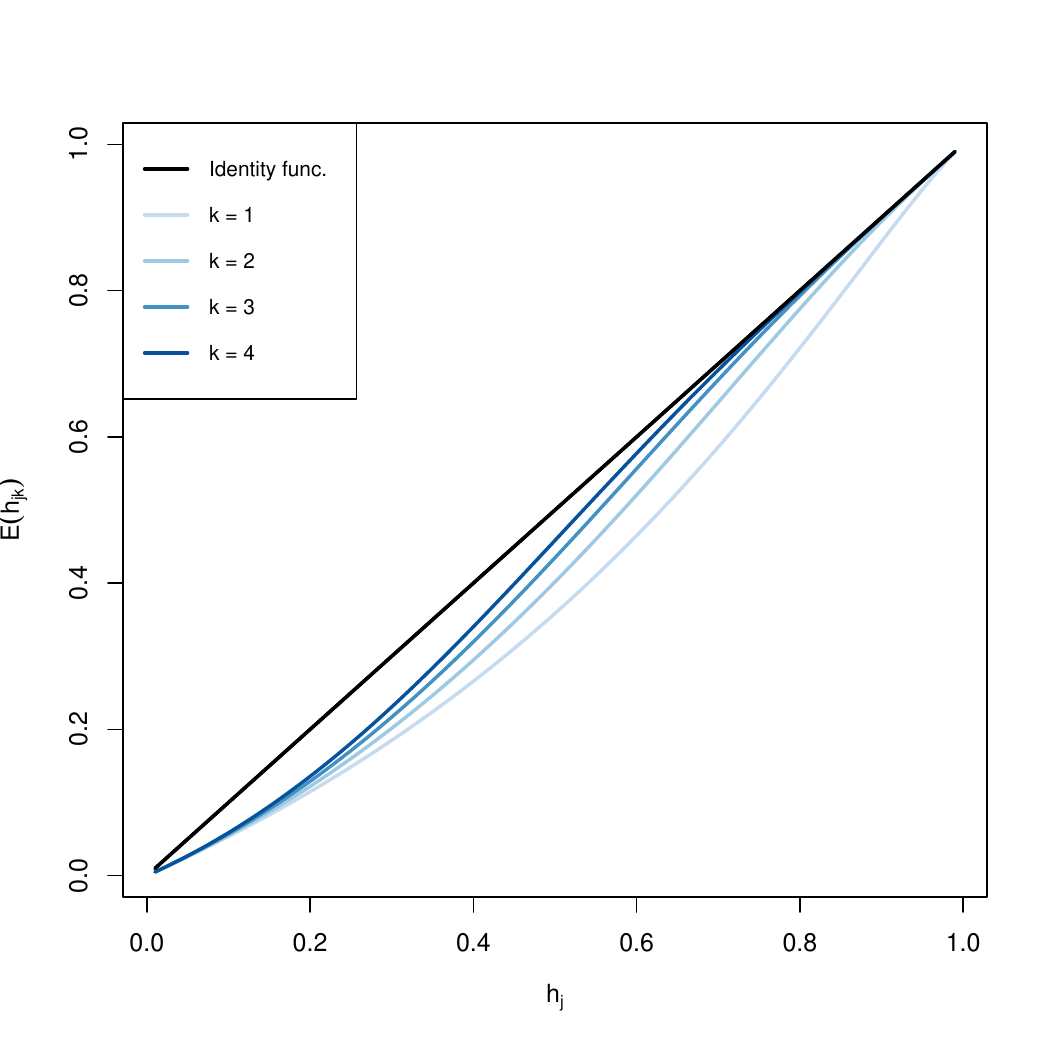}
	\caption{Expectation of $\hat{h}_{j,k}$ as a function of $p=h_j$ for $k=1,2,3,4$ and a Geometric distribution $F$ together with the identity function (black).}
	\label{figure_plot_esperanza}
\end{figure}
\end{example}

\begin{remark}\label{jgrande} The distribution of the estimator $\hat h_{j,k}$ obtained in Theorem \ref{theorem_rhat} is valid as long as $r_n\ge j+k$, that is, the random variables $V_m^\ell$ appearing in the definition of the estimator $\hat h_{j,k}$ in \eqref{eq_estimador} coincide with the corresponding $\tilde V_m^\ell$ in \eqref{tilde_V}. When $r_n <j+k$, the derivation of the properties of $\hat h_{j,k}$ may be carried out taking into account that $V_m^\ell$ is a censored version of $\tilde V_m^\ell$. In practice, this means that the distribution of $\hat h_{j,k}$ follows that of Theorem \ref{theorem_rhat} for all $j\le r_n-k$, but not for $j\in (r_n-k+1,r_n]$. Nevertheless, from our original sample of $\delta$-records with $\delta=-k-1$, we can extract the sample of $\delta'$-records, where $ \delta'=-k'-1$ for $k'<k$. Thus, in the case where $j+k\ge r_n$, to obtain a MLE of $h_j$ with the distribution of Theorem \ref{theorem_rhat}, we may use $\hat h_{j,k'}$ with $k'$ such that $j+k'\le r_n$. In this scenario, all the $V_m^l$ in Equation \eqref{eq_estimador} are equal to the corresponding $\tilde V_m^\ell$ in \eqref{tilde_V}.
\end{remark}

\section{Applications in Statistical Inference}\label{section_practica}

\subsection{Confidence intervals for $h_j$}

We may use the exact distribution of $\hat h_{j,k}$ to construct confidence intervals for $h_j$ by inverting numerically the CDF in Theorem \ref{theorem_rhat}. However, note that the distribution of $\hat h_{j,k}$ depends not only on $h_j$, but also on $d_{j,k} = h_j + \overline{F}(j+k \vert j) = h_j + (1 - h_j)(1 - h_{j+1}) \cdots (1 - h_{j+k}) = h_j + (1 - h_j)\overline{F}(j+k \vert j+1)$. 
Thus, $\overline{F}(j+k \vert j+1)$ may be seen as a nuisance parameter in the construction of a confidence interval. 

Nonetheless, note that if the assumption $h_{j+1}\approx \cdots\approx h_{j+k}$ seems reasonable, then $d_{j,k}\approx h_j+(1-h_j)^{k+1}$. With this approximation, the CDF of $\hat h_{j,k}$ depends only on the parameter $h_j$, so the inversion method may be applied. The assumption is exact for the Geometric distribution, so it should also be valid for distributions where the hazard function does not exhibit abrupt changes. 

\begin{example}[Negative binomial distribution] Let $\{X_n\}_{n\in\N}$ be a sequence of i.i.d. random variables such that $X_1\sim\dbinneg(m;p)$ for some $m\in\N$ and $p\in(0,1)$. 
It is shown in \cite{Vervaat73} that
\begin{equation*}p-\frac{(m-1)(1-p)}j\le h_j\le p.\end{equation*}
This means that the values  $h_{j+1},\ldots,h_{j+k}$ are not very different from $h_j$, so we may use the approximation above. As an example, we fix $m=5$, $p=0.8$ and build confidence intervals for  $h_6$ when $k=3,4$. In Figure \ref{fig_ci_nbinom}, both the estimates $\hat{h}_{6,k}$ and the confidence intervals are plotted for $100$ independent simulated sequences.  We note that the confidence intervals are conservative, which may be due to the approximation of $h_{j+1},\ldots,h_{j+k}$ included in our procedure.
\begin{figure}[H]
\centering
\includegraphics[scale=0.53]{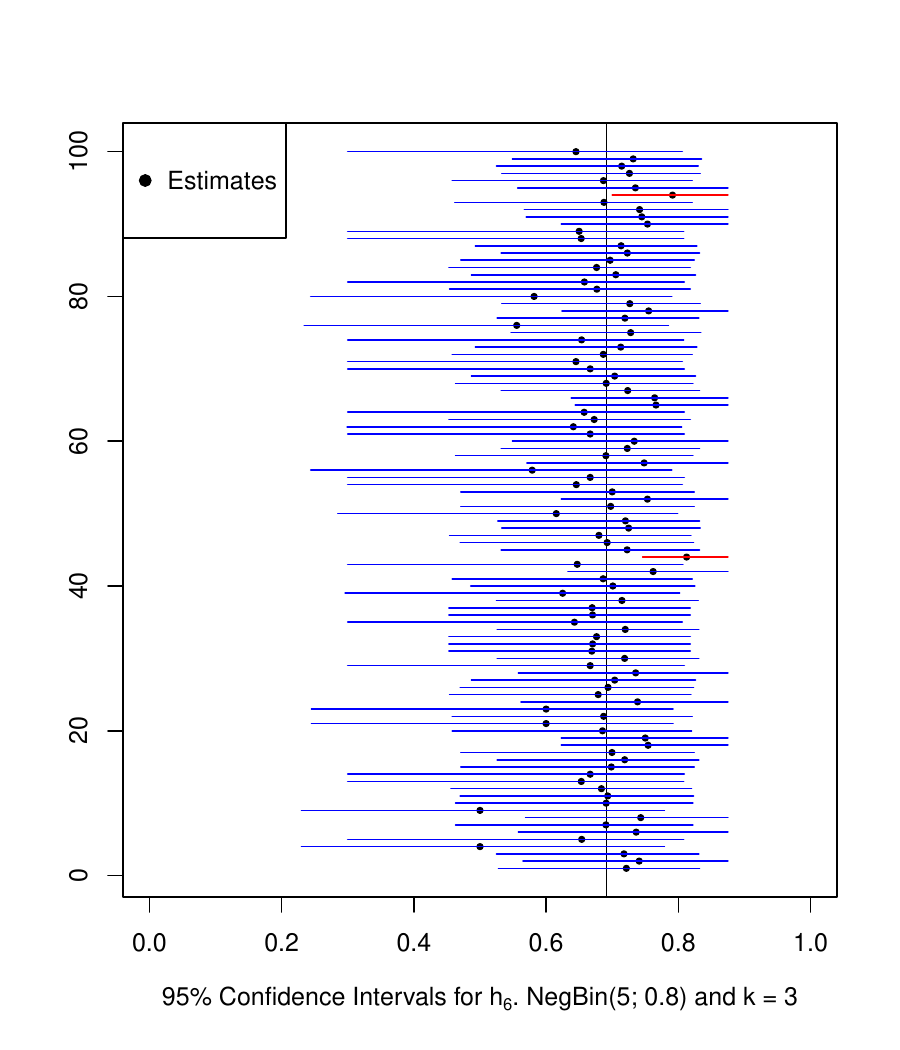}
\includegraphics[scale=0.53]{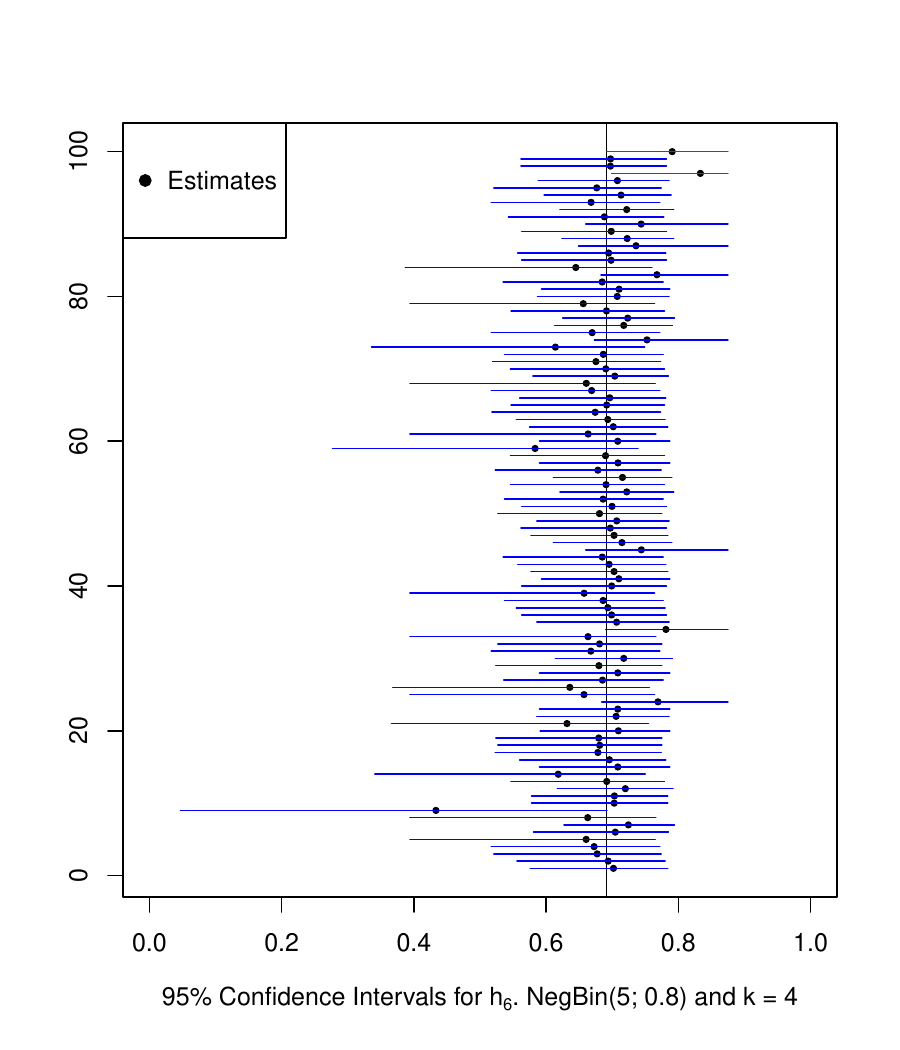}
\caption{Estimates and $95\%$ confidence intervals for $h_6$ for $100$ independent sequences following a negative binomial distribution $\dbinneg(5;0.8)$ and $k=3,4$. The intervals that contain the value $h_6\approx 0.6914$ are plotted in blue and those that do not are plotted in red.}
\label{fig_ci_nbinom}
\end{figure}
\end{example}

\subsection{Likelihood ratio test for goodness of fit}\label{section_test_LR}

Given a sample $\mathbf{t}$ of $\delta$-records, we can use it to test $H_0:F=F_0$ vs $H_1:F\ne F_0$, where $F_0$ is a known distribution. To this end, we may employ the likelihood ratio statistic:
\begin{equation*}
\text{LR}=\frac{\mathcal{L}(\mathbf{t}\mid\mathbf{h}_0)}{\sup_{\mathbf{h}}\mathcal{L}(\mathbf{t}\mid\mathbf{h})},
\end{equation*}
 where the numerator is \eqref{eq_verosimilitud} evaluated at the value of $\mathbf{h}$ corresponding to the hazard rate from the theoretical distribution $F_0$, and the denominator is also \eqref{eq_verosimilitud} but evaluated at the MLE of $\mathbf{h}$ given by \eqref{eq_estimador}.
 
We can also test $H_0:F\in \mathcal{F}_{\boldsymbol{\theta}}$ vs $H_1:F\not\in \mathcal{F}_{\boldsymbol{\theta}}$, where $\mathcal{F}_{\boldsymbol{\theta}}$, $\boldsymbol{\theta}\in\Theta$, is a parametric family of distributions, using 
\begin{equation*}
\text{LR}=\frac{\sup_{\mathbf{h}:F\in \mathcal{F}_{\boldsymbol{\theta}}}\mathcal{L}(\mathbf{t}\mid\mathbf{h})}{\sup_{\mathbf{h}}\mathcal{L}(\mathbf{t}\mid\mathbf{h})}.
\end{equation*}
In this case the numerator is computed as the maximum value of \eqref{eq_verosimilitud} over all $\mathbf{h}$ corresponding to distributions $F\in \mathcal{F}_{\boldsymbol{\theta}}$.

The exact distribution of the likelihood ratio statistic under the null hypothesis is difficult to obtain, even when this hypothesis is simple. We can approximate the p-value of the test by using a standard parametric bootstrap procedure, as outlined in Section 4.4 of \cite{Davison97}. In our setting, if the null hypothesis is simple, we simulate $B$ samples of $\delta$-records from $F_0$. The p-value is then estimated as the proportion of simulations the likelihood ratio statistic obtained from the bootstrap sample is smaller than the one obtained with the original sample. If the null hypothesis is composite, we compute $\hat{\boldsymbol{\theta}}$, the MLE of $\boldsymbol{\theta}\in\Theta$, simulate $B$ samples from $F_{\hat{\boldsymbol{\theta}}}$ and estimate the p-value in the same way.

\begin{example}[Testing the Geometric distribution] As an example of applying the likelihood ratio test based on $\delta$-records, we consider testing $H_0:F\in \mathcal{F}_p$ vs $H_1:F\not\in \mathcal{F}_p$, where $\mathcal{F}_p$ represents the family of Geometric distributions with parameter $p\in(0,1)$. The MLE of $p$ for the Geometric distribution based on $\delta$-records was derived in \cite{Gouet14} and it can be expressed as 
\begin{equation}
\hat{p}_{k} := \frac{\sum\limits_{j=0}^{R_{n}} V_j^k}{\sum\limits_{j=0}^{R_{n}} N_j^k} = \frac{\Delta_k}{\sum\limits_{j=0}^{R_{n}} N_j^k},\label{estimate_2014}
\end{equation}
where $\Delta_k$ is the number of $-(k+1)$-records, that is, $\Delta_k = n + S_1+\cdots+S_n$.
The resulting likelihood ratio is
{\allowdisplaybreaks
\begin{equation*}
\text{LR} = \frac{\left(\sum\limits_{j=0}^{R_{n}} V_j^k\right)^{\sum\limits_{j=0}^{R_{n}} V_j^k} \left(\sum\limits_{j=0}^{R_{n}} (N_j^k - V_j^k)\right)^{\sum\limits_{j=0}^{R_{n}} (N_j^k - V_j^k)}}{\left(\sum\limits_{j=0}^{R_{n}} N_j^k\right)^{\sum\limits_{j=0}^{R_{n}} N_j^k}} \prod_{j=0}^{R_{n}} \frac{(N_j^k)^{N_j^k}}{(V_j^k)^{V_j^k} (N_j^k - V_j^k)^{(N_j^k - V_j^k)}}. 
\end{equation*}
}

For assessing the test, we simulated $1000$ samples of $\delta$-records with $n=3$ from the Geometric and Poisson distributions with random parameters and counted the number of times that our test rejected the null hypothesis in each case. The random parameters are simulated according to the following distributions:
\begin{itemize}
	\item If $F\sim \dgeom(p)$, the probability of success $p$ is drawn from a Uniform distribution on $[0.01, 1)$, that is, $p\sim\mathrm{U}([0.01, 1))$.
	\item If $F\sim \dpoi(\lambda)$, the rate $\lambda$ is drawn from a Gamma distribution with shape and scale parameter equal to $2$, that is, $\lambda\sim\mathrm{Gamma}(2, 2)$.
\end{itemize}

The results are shown in Table \ref{table_power_test} for a significance level of $\alpha=0.05$. We see that for both distributions the type I error is very close to the nominal 0.05 for every value of $k$. This is rather satisfactory, especially considering that the p-value of the tests is not exact but approximated via parametric bootstrap. Regarding type II error, the power of the test is dependent on $k$, with larger values of $k$ resulting in more powerful tests due to the inclusion of more observations.

\begin{table}[H]
	\centering
	\begin{tabular}{|c|c|c|c|c|}
		\cline{2-5}
		\multicolumn{1}{c|}{}&\multicolumn{2}{c|}{$H_0: F$ is Geometric} & \multicolumn{2}{c|}{$H_0: F$ is Poisson}\\
		\hline
		\multirow{4}{*}{Sample is Geometric} & $k$ & Rejection rate (Type I Error) & $k$ & Rejection rate (Type II Error)\\
		\cline{2-5}
		&1 & 0.065 & 1 & 0.268\\
		&2 & 0.064 & 2 & 0.393\\
		&3 & 0.053 & 3 & 0.594\\
		\hline
		\multirow{4}{*}{Sample is Poisson}& $k$ & Rejection rate (Type II Error) & $k$ & Rejection rate (Type I Error)\\
		\cline{2-5}
		&1 & 0.302 & 1 & 0.056\\
		&2 & 0.512 & 2 & 0.051\\
		&3 & 0.599 & 3 & 0.045\\
		\hline
	\end{tabular}
	\caption{Rejection rates of the $\delta$-record-based goodness-of-fit test computed for $1000$ randomly-generated Geometric and Poisson sequences with $n = 3$.}
	\label{table_power_test}
\end{table}

\end{example}
\subsection{Hazard rate estimator under monotonicity assumptions}

So far we have not made any assumption on the form of the hazard rate function $\mathbf{h}$. However, in many situations we may have prior information about the distribution and, in particular, we may assume that it has an increasing failure rate (IFR) or a decreasing failure rate (DFR).

In this section, we derive estimators for $h_j$ under both increasing and decreasing monotonicity constraints. These estimators are obtained by optimizing the likelihood function \eqref{eq_verosimilitud} subject to the conditions $0 \leq h_0 \leq h_1 \leq \cdots < 1$ for increasing hazard rates, or $1 > h_0 \geq h_1 \geq \cdots \geq 0$ for decreasing hazard rates. The problem of maximizing a likelihood function of the form
$$
\prod_{j=1}^n p_j^{\alpha_j}(1-p_j)^{\beta_j}
$$
under a monotonicity assumption is solved in \cite{Ayer55}. Using their result and the product form of the likelihood in \eqref{eq_verosimilitud}, we can readily derive that the increasing and decreasing monotonic hazard rate estimators, $\hat{h}_{j,k}^{\text{inc.}}$ and $\hat{h}_{j,k}^{\text{dec.}}$, are given by
\begin{equation*}
\hat{h}_{j,k}^{\text{inc.}} = \max_{0\le \ell\le j}\ \min_{j\le m\le r_{n}}\frac{\sum\limits_{i = \ell}^m V_{i}^k}{\sum\limits_{i = \ell}^m N_{i}^k},\qquad\qquad
\hat{h}_{j,k}^{\text{dec.}} = \min_{0\le \ell\le j}\ \max_{j\le m\le r_{n}}\frac{\sum\limits_{i = \ell}^m V_{i}^k}{\sum\limits_{i = \ell}^m N_{i}^k}.
\end{equation*}

Figure \ref{fig-poisson-mon} illustrates the increasing trend estimator for a simulated sample from a Poi(6) distribution  with $n=5$ and $k=3$. We observe a very good agreement between the estimated values of $h_j$ and the true values. 

\begin{figure}[h]
	\centering
	\includegraphics[scale=0.55]{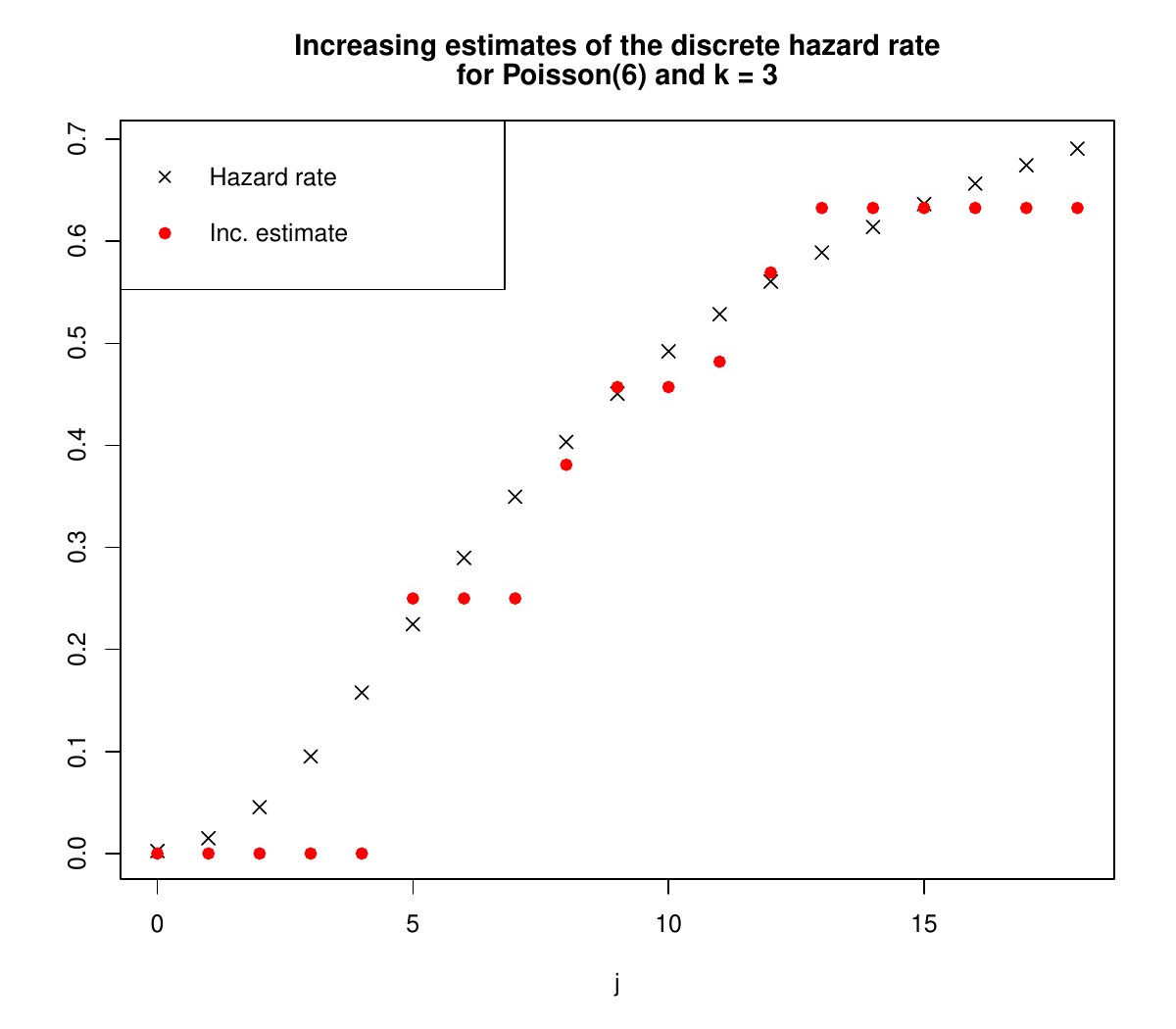}
	\caption{Estimator $\hat{h}_{j,k}^{\text{inc.}}$ for $X_1\sim\dpoi(6)$ with $n = 5$, $k = 3$ and $r_{n} = 18$.}
	\label{fig-poisson-mon}
\end{figure}

\section{Application to real data} \label{section_fixed_sample_size}

In this section, we apply our results to two real data sets. Note that, when dealing with real data, we cannot be certain if the sample includes all the near-records associated with the last record. Therefore, in the likelihood \eqref{eq_vero_2014} the term $\prob(S_n=s_n)$ should be replaced by $\prob(S_n\ge s_n)$ (see \cite{Gouet14}), which is finally tantamount to replacing the rightmost term $\overline{F}(r_n)$ in \eqref{eq_vero_2014} with $\overline{F}(r_n-k-1)$, yielding

\begin{align}
\mathcal{L}(\mathbf{t}) &= \overline{F}(r_n - k - 1)\prod_{i=1}^n \frac{P(r_i)}{\overline{F}(r_i-k-1)^{s_i+1}} \prod_{j=1}^{s_i} P(y^j_i)\nonumber \\
&= \prod_{j=1}^{r_{n}-k-1} h_j^{v^k_j} (1-h_j)^{1 + \sum\limits_{\ell=1}^k v_{j+\ell}^{k-\ell}} \prod_{j=r_{n}-k}^{r_{n}} h_j^{v^k_j} (1-h_j)^{\ \sum\limits_{\ell=1}^k v_{j+\ell}^{k-\ell}}. \label{eq_incomplete_likelihood}
\end{align}

From \eqref{eq_incomplete_likelihood}, we deduce that the estimator $\hat{h}_{j,k}$ for values $j=0, \dots, r_{n}-k-1$ matches the expression in \eqref{eq_estimador} and follows the distribution given in Theorem \ref{theorem_rhat} . This is expected, as $r_{n}$ has been observed, ensuring that all necessary information for these values has been accounted for. For the remaining values of $j$, the estimator becomes
\begin{equation*}
\hat{h}_{j,k} = \frac{V_j^k}{V_j^k + \cdots + V_{j+k}^0}.
\end{equation*}

The modification of the likelihood in this situation results in the MLE for $h_{r_n}$ being trivially equal to 1. This is because $V_{r_n+1}^{k-1}= \cdots= V_{r_n+k}^0=0$. This outcome is intuitive: since from our sample it is uncertain whether a value greater than $r_n$ will occur in the future, the most plausible conclusion is that the support of $F$ ends at $r_{n}$.

\subsection{Real data: times until defective items}
We apply our results to the data presented in Table \ref{table_xiegoh}, taken from Table III in \cite{Xie93}. It corresponds to the number of items observed until a defective item is detected and is modeled by the Geometric distribution with a starting point of $1$. The sample consists of $87$ observations, and $5$ records were identified. The values of the $\delta$-records for $k=1,2,3$ are displayed in Table \ref{table_delta_records_xiegoh}. The resulting estimators $\hat{h}_{j,k}$ are presented in Table \ref{table_estimators_xiegoh}.

\begin{table}[H]
\centering
\begin{tabular}{l}
\hline 1, 5, 1, 1, 1, 3, 2, 4, 3, 2, 3, 1, 1, 1, 3, 1, 3, 1, 6, 4, 1, 9, 2, 6, 2, 1, 3, 1, 3, 1, 1, 10,
2, 7, 1, 8, 1, 1, 2, 1, 1, 6, 1, 2, \\ 1, 4, 1, 1, 1, 3, 5, 1, 1, 1, 1, 5, 2, 4, 5, 1, 2, 2, 1, 3, 1, 
1, 1, 3, 1, 2, 1, 1, 1, 1, 1, 1, 5, 2, 2, 4, 6, 1, 3, 1, 1, 1, 1\\
\hline
\end{tabular}
\caption{Real observed data. Xie and Goh (1993).}
\label{table_xiegoh}
\end{table}

\begin{table}[H]
	\centering
	\begin{tabular}{rlc}
		\hline $k$ & $-(k+1)$-records& Total \\
		\hline 1 & $1,5,4,6,9,10$ & 6\\
		2 & $1,5,3,4,3,3,3,3,6,4,9,10,8$ & 13\\
		3 & $1,5,3,2,4,3,2,3,3,3,6,4,9,6,10,7,8$ & 17\\
		\hline
	\end{tabular}
	\caption{$\delta$-records for the sample in Table \ref{table_xiegoh}.}
	\label{table_delta_records_xiegoh}
\end{table}

\begin{table}[H]
\centering
\begin{tabular}{|c|c|c|c|c|c|c|c|c|c|c|c|}
\hline
\backslashbox{$k\!\!\!$}{$\!\!\!j$} & 1 & 2 & 3 & 4 & 5 & 6 & 7 & 8 & 9 & 10 & N\\
\hline 1 & 0.500 & 0.000 & 0.000 & 0.333 & 0.333 & 0.500 & 0.000 & 0.000 & 0.333 & 1 & 6\\
2 & 0.500 & 0.000 & 0.625 & 0.400 & 0.333 & 0.500 & 0.000 & 0.250 & 0.500 & 1 & 13\\
3 & 0.500 & 0.200 & 0.500 & 0.400 & 0.333 & 0.500 & 0.200 & 0.333 & 0.500 & 1 & 17\\
\hline
\end{tabular}
\caption{Estimated values $\hat{h}_{j,k}$ for $j=1,\ldots,10$ and $k=1,2,3$ for the $\delta$-record samples in Table \ref{table_delta_records_xiegoh}.}
\label{table_estimators_xiegoh}
\end{table}

Additionally, we can apply our goodness-of-fit test in Section \ref{section_test_LR} to assess whether Xie and Goh's assumption that the sample follows a Geometric distribution is reasonable. The test is applied for $k = 1, 2, 3$ (see Table \ref{table_p_values}). We conclude that the hypothesis cannot be rejected. 

\begin{table}[H]
\centering
\begin{tabular}{|c|c|}
\hline
 \multicolumn{2}{|c|}{$H_0: F$ is Geometric}\\
\hline
$k$ & p-value \\
\hline 1 & 0.961\\
2 & 0.615  \\
3 & 0.918 \\
\hline
\end{tabular}
\caption{Simulated p-values of the goodness-of-fit test for the sample in Table \ref{table_xiegoh}.}
\label{table_p_values}
\end{table}
 In \cite{Gouet14}, the estimator in \eqref{estimate_2014} is applied to this data set and compared with the MLE for the entire sample, which is simply the inverse of the mean of the sample, yielding a value of $0.42$. Similarly, in Figure \ref{fig-estiamtors-xiegoh} of this paper, the values of $\hat{h}_{j,k}$ presented in Table \ref{table_estimators_xiegoh} are plotted alongside this estimate. It is noteworthy that, for $k=3$, the estimates $\hat{h}_{j,k}$ are close to $0.42$, the parametric MLE of the constant hazard rate, despite the fact that our estimation procedure is nonparametric and utilizes only around $20\%$ of the whole sequence of observations.

\begin{figure}[h]
	\centering
	\includegraphics[scale=0.58]{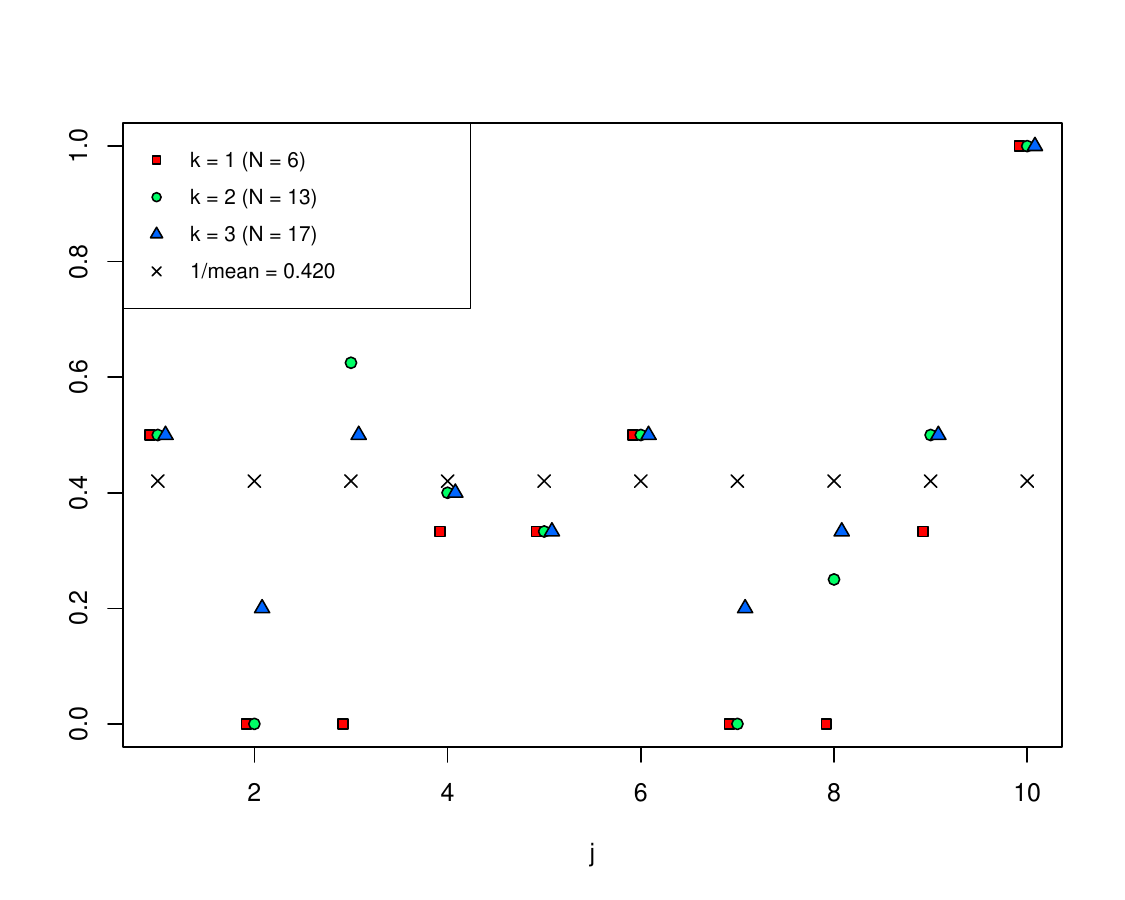}
	\caption{Values of the estimators $\hat{h}_{j,k}$ in Table \ref{table_estimators_xiegoh} along with the MLE of the probability of success $p$ for the entire sample.}
	\label{fig-estiamtors-xiegoh}
\end{figure}

\subsection{Real data: occurrence of high-magnitude earthquakes}
In the study of the temporal occurrence of earthquakes, the Poisson distribution has been widely used as a modeling tool. In \cite{Wu19}, the authors conducted a chi-square goodness-of-fit test for the number of earthquakes with magnitudes greater than $7.5$ per year, finding good agreement between the expected frequencies under the Poisson model and the observed data from the worldwide earthquake catalog of the United States Geological Survey (USGS) for the period 1980--2015.

In our analysis, we use an extended data set covering the period from 1950 to 2023. The corresponding data are presented in Table \ref{data_earthquakes}, and a comparison between the expected and observed frequencies is plotted in Figure \ref{poisson_prob_earthquakes}, demonstrating that the Poisson distribution remains a reasonable model for this broader time range.

\begin{table}[H]
	\centering
	\begin{tabular}{l}
		\hline 7, 3, 3, 2, 1, 1, 2, 6, 4, 1, 6, 2, 0, 8, 3, 6, 5, 0, 6, 4, 
		2, 5, 4, 3, 2, 8, 4, 6, 4, 3, 2, 2, 0, 1, 2, 3, 3, 2, 3, 2, \\
		5, 3, 2, 4, 5, 5, 6, 4, 4, 4, 7, 7, 6,  5, 3, 5, 5,  10,  2, 8, 
		6, 4, 6, 6, 5, 8, 7, 3, 7, 3, 4, 6, 2, 6
		\\
		\hline
	\end{tabular}
	\caption{Temporal occurrence of earthquakes with magnitudes of 7.5 and above worldwide from 1950 to 2023. United States Geological Survey (USGS).}
	\label{data_earthquakes}
\end{table}
\begin{figure} [H]
	\centering
	\includegraphics[scale=0.65]{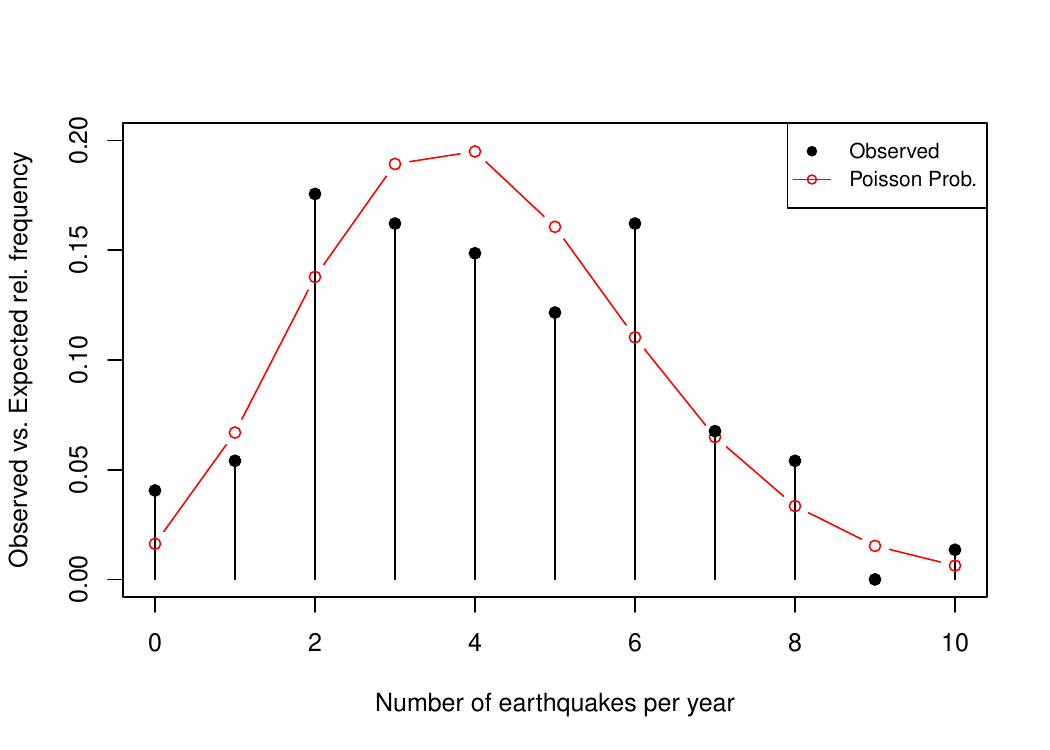}
	\caption{Comparison between observed relative frequency of earthquakes with magnitudes of 7.5 and above from 1950 to 2023 (Table \ref{data_earthquakes}) and the adjusted Poisson model.}
	\label{poisson_prob_earthquakes}
\end{figure}

In addition, we also apply our goodness-of-fit test to assess whether this assumption is compatible with the collected data. The resulting simulated p-values are presented in Table \ref{table_earthquakes_p_values}. As before, we conclude that we cannot reject the hypothesis that the number earthquakes with magnitudes above $7.5$ per year follows a Poisson distribution.

\begin{table}[H]
	\centering
	\begin{tabular}{|c|c|}
		\hline
		 \multicolumn{2}{|c|}{$H_0: F$ is Poisson}\\
		\hline
		$k$ & p-value \\
		\hline 1 & 0.906\\
		2 & 0.361\\
		3 & 0.812\\
		4 & 0.888\\
		\hline
	\end{tabular}
	\caption{Simulated p-values of the goodness-of-fit test for the sample in Table \ref{data_earthquakes}.}
	\label{table_earthquakes_p_values}
\end{table}

 Given that the Poisson distribution has increasing hazard rates, we applied our increasing hazard rate estimator to this data set. The observed $\delta$-records and the resulting estimates are shown in Tables \ref{delta-records-earthquakes} and \ref{table_estimators_earthquakes} respectively. Note that, since $x_1=7$, our sample of $\delta$-records cannot have values smaller than $7-k$, so the estimates of $h_j$ with $j < 7 - k$ are trivially 0.

\begin{table}[H]
	\centering
	\begin{tabular}{rlc}
		\hline $k$ & $-(k+1)$-records & Total\\
		\hline 1 & $7,6,6, 8, 8, 7,7,10$ & 8\\
		2 & $7,6,6,8,6, 6,8,6,6,7,7,6,10,8,8$ & 15\\
		3 & $7,6,4,6,8,6,5,6,5,8,6,5,5,5,6,7,7,6,5,5,5,10,8,8,7,7$ & 26\\
		4 & $7, 3, 3, 6, 4, 6, 8, 6, 5, 6, 4, 5, 4, 8, 4, 6, 4, 5, 4, 5, 5, 6, 4, 4, 4, 7, 7,$& 41\\ 
		&$6, 5, 5, 5, 10, 8, 6, 6, 6, 8, 7, 7, 6, 6$ & \\
		\hline
	\end{tabular}
	\caption{$\delta$-records for the sample in Table \ref{data_earthquakes}.}
	\label{delta-records-earthquakes}
\end{table}

\begin{table}[H]
\centering
\begin{tabular}{|c|c|c|c|c|c|c|c|c|c|c|c|c|}
\cline{2-13}
\multicolumn{1}{c|}{}&\multicolumn{12}{c|}{Nonparametric increasing estimates of the discrete hazard rate}\\
\hline
\backslashbox{ $k\!\!\!$}{$\!\!\!j$} & 0 & 1 & 2 & 3 & 4 & 5 & 6 & 7 & 8 & 9 & 10 & N\\
\hline 
1 & 0.000 & 0.000 & 0.000 & 0.000 & 0.000 & 0.000 & 0.500 & 0.500 & 0.600 & 0.600 & 1 & 8\\
 2 & 0.000 & 0.000 & 0.000 & 0.000 & 0.000 & 0.000 & 0.538 & 0.538 & 0.714 & 0.714 & 1 & 15\\
 3 & 0.000 & 0.000 & 0.000 & 0.000 & 0.200 & 0.381 & 0.538 & 0.538 & 0.714 & 0.714 & 1 & 26\\
4 & 0.000 & 0.000 & 0.000 & 0.286 & 0.300 & 0.381 & 0.545 & 0.545 & 0.714 & 0.714 & 1 & 41\\
\hline
\addlinespace
\cline{2-13}
\multicolumn{1}{c|}{}& \multicolumn{12}{c|}{Parametric (Poisson) estimates of the discrete hazard rate}\\
\hline &0.016 & 0.069 & 0.151 & 0.244 & 0.332 & 0.409 & 0.473 & 0.527 & 0.572 & 0.610 & 0.642 & 74\\
\hline
\end{tabular}
\caption{Estimated values $\hat{h}^{\text{inc.}}_{j,k}$ for $j=0,\ldots,10$ and $k=1,2,3,4$ for the $\delta$-record samples in Table \ref{delta-records-earthquakes} and nonparametric increasing estimators of $h_j$ for the entire sample $(k=\infty)$ in Table \ref{data_earthquakes}.}
\label{table_estimators_earthquakes}
\end{table}

The last line of Table \ref{table_estimators_earthquakes} shows the parametric estimation of $h_j$, assuming that the sample comes from a Poisson distribution and using all the data. That is, the MLE of $\lambda$ is $\overline x=4.108$ and the estimates of the hazard function $h_j$ are the corresponding values of $h_j$ from a Poi(4.108) distribution. See also Figure \ref{fig_earthquakes_estimators} for the comparison of the estimates. It is remarkable that, for most of the values of $k$, the nonparametric estimates are close to the parametric ones, except for the first and last values of $j$, for which we have the least information.

\begin{figure}[H]
\centering
\includegraphics[scale=0.58]{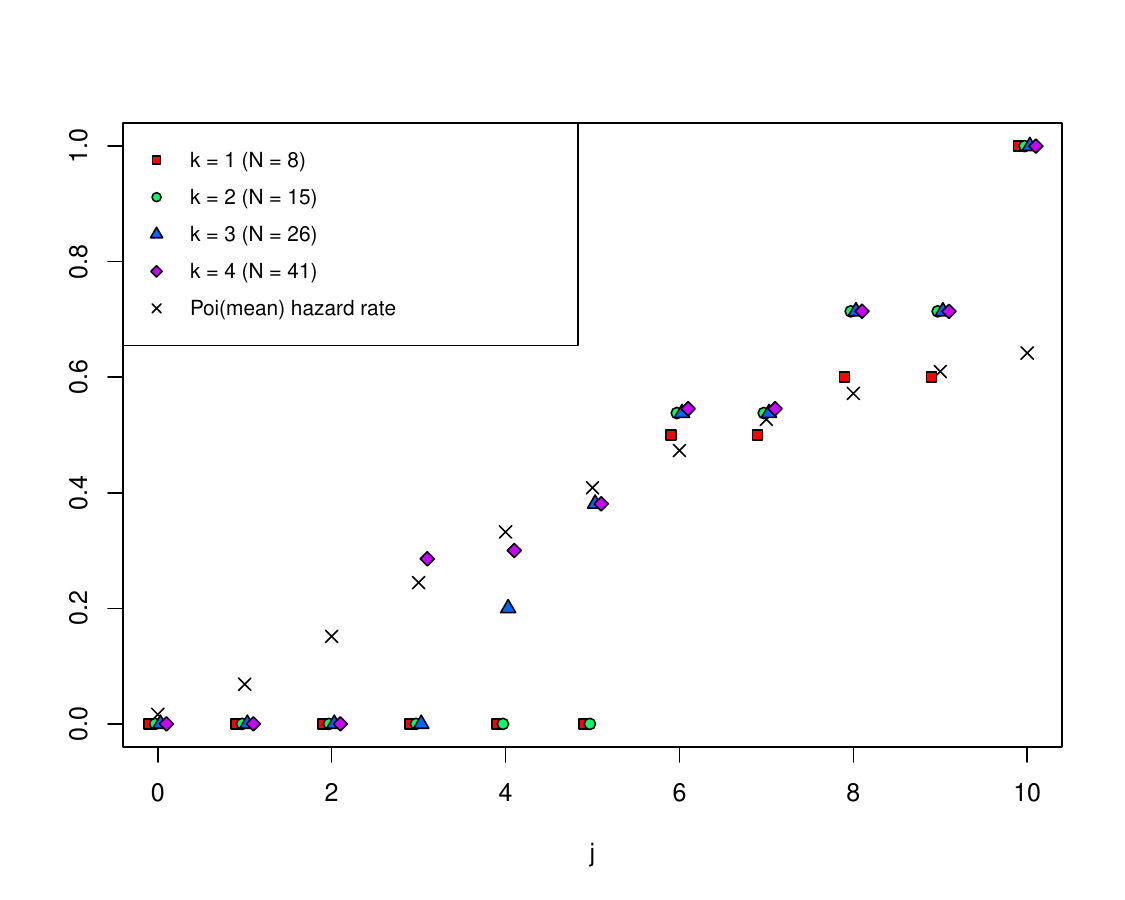}
\caption{Value of the estimators $\hat{h}_{j,k}$ and parametric estimates of the Poisson hazard rates in Table \ref{table_estimators_earthquakes}.}
\label{fig_earthquakes_estimators}
\end{figure}

\section{Conclusions and future work}\label{conc}
Statistical inference based on records is a well-established topic in the literature. The inclusion of near-records in the sample has been shown to be advantageous in parametric inference (\cite{Gouet14}, \cite{Gouet20}, \cite{LBSM13}). This is the first paper to address nonparametric inference based on $\delta$-records. Our results on the estimation of the discrete hazard rate, which include the explicit form of the MLE and its exact distribution, along with confidence intervals and hypothesis tests, demonstrate that near-records can be successfully incorporated into the estimation procedure in this context. This is particularly important in practice as $\delta$-records can be collected by a slight modification of the experimental setup for records (see \cite{Gouet14}), eliminating the need for collecting several samples of records, as required in traditional nonparametric procedures based solely on record values (see \cite{Gulati94}).

There are some directions for future work on this topic, such as:
\begin{itemize}
	\item Continuous distributions. This paper focuses on discrete distributions and the MLE is derived for a finite number of parameters $(h_1,\ldots,h_{r_n})$. To extend this approach to continuous distributions, the data can be discretized and our results can be used to estimate the hazard rate of the discrete version of the distribution. Then, estimators of the continuous hazard rate can be obtained via smoothing techniques.
	
	\item Bayesian inference. In this paper, we have only considered maximum likelihood estimation, which is a frequentist procedure. Since we have the explicit expression for the likelihood, we may include prior distributions for $\mathbf{h}$ and develop Bayesian procedures. This approach has proven successful in parametric estimation based on $\delta$-records both for discrete and continuous random variables (\cite{Gouet14}, \cite{Gouet20}).
	
	\item Incorporating record- and near-record-times in the sample. While including these statistics in the likelihood \eqref{eq_vero_2014} is straightforward, the maximization procedure to find the MLEs is much more challenging and, in particular, no explicit expression for the estimators seems to exist. This complicates the study of its properties and the construction of confidence intervals, for instance. However, since the information provided by record- and near-record-times (if they are collected) can be valuable, the additional complexity in the estimation process is justified.
\end{itemize}
	
\section*{Ackowledgements}

This research was partially funded by grants PID2020-116873GB-I00 and TED2021-130702B-I00 from the Ministry of Science, Innovation and Universities of Spain. The authors are members of the research group Modelos Estoc\'{a}sticos (E46\_23R), Gobierno de Arag\'{o}n. M. Alcalde gratefully acknowledges the support by the doctoral scholarship ORDEN ECU/592/2024 from Gobierno de Arag\'{o}n.

\section{Appendix}

\begin{lemma}[Lemma 1 in \cite{Stepanov93}] \label{lemma_Stepanov} 
Let $\{X_n\}_{n\in\N}$ be an i.i.d. sequence with values in $\Z_+$ and infinite support. Then the random variables $\{A_j^0\}_{j\in\Z_+}$ are independent and
$$
\prob(A^0_j = m) = h_j^m(1-h_j),\quad m \in\Z_+,
$$
i.e.,
$A^0_j\sim\mathrm{Geom}^*(1- h_j)$ for all $j\in\Z_+$.
\end{lemma}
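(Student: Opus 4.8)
The plan is to rewrite $A_j^0$ as a count over a block of observations delimited by two consecutive level–hitting times, and then to exploit the mutual independence of the $X_i$. For $j\ge -1$ set $T_j:=\min\{n:X_n>j\}$, so that $T_{-1}=1$ and, since $\{X_n>j\}\subseteq\{X_n>j'\}$ whenever $j'\le j$, the times $T_{-1}\le T_0\le T_1\le\cdots$ are nondecreasing; each $T_j$ is a.s.\ finite because infinite support forces $\overline F(j)>0$. The first step is the identity
$$A_m^0=\#\{\,i\in\N:\,T_{m-1}\le i<T_m,\ X_i=m\,\},\qquad m\in\Z_+ .$$
To see this, note that for $i<T_{m-1}$ we have $X_i\le m-1\ne m$; writing $T_m':=\min\{i\ge T_{m-1}:X_i>m\}$, for $T_{m-1}\le i<T_m'$ every earlier observation is $\le m$, so $M_{i-1}\le m$ and hence $X_i=m$ is exactly a weak record, while for $i\ge T_m'$ one has $M_{i-1}\ge X_{T_m'}>m$, so $X_i=m$ cannot be a weak record; finally $T_m=T_m'$, and both sides vanish in the degenerate case $X_{T_{m-1}}>m$.

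Now fix $t\in\N$ and condition on $\{T_{m-1}=t\}=\{X_1\le m-1,\dots,X_{t-1}\le m-1,\ X_t\ge m\}$. By independence of the $X_i$, under this conditioning the blocks $(X_1,\dots,X_{t-1})$, $X_t$, $(X_{t+1},X_{t+2},\dots)$ are mutually independent, with $(X_1,\dots,X_{t-1})$ i.i.d.\ $\sim X_1\mid X_1\le m-1$, with $X_t\sim X_1\mid X_1\ge m$, and with $(X_{t+1},X_{t+2},\dots)$ i.i.d.\ $\sim X_1$. On $\{T_{m-1}=t\}$ the variables $A_0^0,\dots,A_{m-1}^0$ are functions of $(X_1,\dots,X_{t-1})$ alone, since for each $j\le m-1$ the hitting time $T_j$ is recovered as $\min(t,\min\{i<t:X_i>j\})$; by the first step $A_m^0$ is a function of $(X_t,X_{t+1},\dots)$. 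Thinning the i.i.d.\ tail $(X_{t+1},X_{t+2},\dots)$ to its sub-sequence of terms $\ge m$ and prepending $X_t$ produces an i.i.d.\ sequence with common law $\prob(X_1=\cdot\mid X_1\ge m)$, and $A_m^0$ equals the number of its terms equal to $m$ before its first term exceeding $m$. Since $\prob(X_1=m\mid X_1\ge m)=P(m)/\overline F(m-1)=h_m$ and $\prob(X_1>m\mid X_1\ge m)=\overline F(m)/\overline F(m-1)=1-h_m$, we get $\prob(A_m^0=a\mid T_{m-1}=t)=h_m^{a}(1-h_m)$ for all $a\in\Z_+$, independently of $t$. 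Hence $A_m^0$ is independent of $\sigma(A_0^0,\dots,A_{m-1}^0)$ and $A_m^0\sim\mathrm{Geom}^*(1-h_m)$; a finite induction on $r$ then yields $\prob(A_0^0=a_0,\dots,A_r^0=a_r)=\prod_{m=0}^{r}h_m^{a_m}(1-h_m)$, which is the asserted mutual independence of $\{A_j^0\}_{j\in\Z_+}$ with the stated marginals.

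I expect the delicate point to be the measurability bookkeeping of the second paragraph: one must check that, conditionally on $\{T_{m-1}=t\}$, the vector $(A_0^0,\dots,A_{m-1}^0)$ depends only on $(X_1,\dots,X_{t-1})$ even though off this event $T_{m-1}$ is only $\F_{T_{m-1}}$-measurable, and that the boundary observation $X_t$, conditioned to lie in $\{\ge m\}$ rather than being a fresh draw, merges correctly with the i.i.d.\ tail under the thinning step. Once these are in place, the geometric computation is routine.
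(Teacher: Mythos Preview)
Your proof is correct. The paper does not actually prove this lemma: it is stated in the Appendix as a citation of Lemma~1 in \cite{Stepanov93}, with no argument given. So there is no paper proof to compare against; you have supplied a complete, self-contained argument where the authors simply invoked the literature.

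For what it is worth, your approach---decomposing the sequence at the level-hitting times $T_j$, showing $A_m^0$ is measurable with respect to the block $(X_{T_{m-1}},X_{T_{m-1}+1},\dots)$ while $A_0^0,\dots,A_{m-1}^0$ sit in the complementary block, and then thinning to the conditional i.i.d.\ subsequence---is essentially the standard route to this result and matches the spirit of Stepanov's original argument. The points you flagged as delicate are handled correctly: on $\{T_{m-1}=t\}$ the earlier $A_j^0$ really are functions of $(X_1,\dots,X_{t-1})$ since $T_j\le t$ for $j\le m-1$, and the boundary term $X_t$ has exactly the law $\prob(X_1\in\cdot\mid X_1\ge m)$ needed to merge with the thinned tail. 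The base case $m=0$ (where $T_{-1}=1$ and the ``past'' block is empty) also goes through without modification.
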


\begin{definition}[\cite{Johnson97}]\label{def_geometric}
A discrete random vector $\mathbf{Z} = (Z_1,\ldots, Z_n)$ has a $n$-multidimensional Geometric distribution (with support $\Z_+^n$) with parameters $\pi_1,\ldots,\pi_n\in[0,1)$ such that $\sum_{i=1}^n \pi_i <1$ if its probability mass function is 
$$
\prob(Z_1 = m_1,\ldots,Z_n = m_n ) = \pi_1^{m_1}\cdots \pi_n^{m_n}\rho\ {m_1+\cdots+m_n\choose m_1,\ldots,m_n},
$$
where $\rho = 1-\sum_{i=1}^n \pi_i$, for all $m_1,\ldots,m_n\in\Z_+$. This is denoted as $\mathbf{Z}\sim\mathrm{Geom}^*(\pi_1,\ldots,\pi_n)$.
\end{definition}

\begin{lemma}\label{lemma_combinatoria}
Let $n_1, n_2, \dots, n_{k+1} \in \mathbb{Z}_+$ with $n_{k+1} \geq 1$. The following identity holds:
\begin{align*}
\sum_{i_1=0}^{n_1}\sum_{i_2=0}^{n_2} \cdots \sum_{i_k=0}^{n_k} &\binom{i_1+i_2+\cdots+i_k}{i_1, i_2, \dots, i_k} \binom{n_1-i_1+n_2-i_2+\cdots+n_k-i_k+n_{k+1}-1}{n_1-i_1, n_2-i_2, \dots, n_k-i_k, n_{k+1}-1} \\
&= \binom{n_1+n_2+\cdots+n_k+n_{k+1}}{n_1, n_2, \dots, n_k, n_{k+1}}.
\end{align*}
\begin{proof}
	Consider a collection of sets $A_1,\ldots,A_{k+1}$, each containing indistinguishable individuals. First, choose an element $a$ from set $A_{k+1}$. Next, select subsets $I_1,\ldots,I_k$ from $A_1,\ldots,A_{k}$, respectively, which represent the individuals to be placed before $a$. Let $i_\ell$ denote the number of individuals in subset $I_\ell$ for $\ell=1,\ldots,k$. The multinomial coefficient
	$$
	\binom{i_1+i_2+\cdots +i_k}{i_1,i_2,\ldots,i_k}
	$$
	counts the number of ways to arrange the individuals in subsets $I_1,\ldots,I_k$. Once these individuals are placed before $a$, the remaining individuals can be arranged in
	$$
	 \binom{n_1-i_1+n_2-i_2+\cdots+n_k-i_k+n_{k+1}-1}{n_1-i_1, n_2-i_2, \dots, n_k-i_k, n_{k+1}-1}
	$$
	ways. Finally, multiplying these multinomial coefficients and summing over all possible choices of subsets $I_1,\ldots,I_k$ accounts for all possible arrangements of the individuals from sets $A_1,\ldots,A_{k+1}$, thereby yielding the desired result.
\end{proof}
\end{lemma}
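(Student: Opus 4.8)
The plan is to prove the identity by a formal-power-series computation, recognizing each side as the coefficient of a fixed monomial in a single rational function of the variables $x_1,\dots,x_k$. The only ingredient required is the standard expansion obtained by combining the negative binomial series with the multinomial theorem: writing $u:=x_1+\cdots+x_k$, one has, for every $m\in\mathbb{Z}_+$, the formal power series identity
\begin{equation*}
\frac{1}{(1-u)^{m+1}}=\sum_{i_1,\dots,i_k\ge0}\binom{i_1+\cdots+i_k+m}{i_1,\dots,i_k,m}\,x_1^{i_1}\cdots x_k^{i_k}.
\end{equation*}
In particular, taking $m=0$ gives $[x_1^{i_1}\cdots x_k^{i_k}]\,(1-u)^{-1}=\binom{i_1+\cdots+i_k}{i_1,\dots,i_k}$, and taking $m=n_{k+1}-1$ (valid since $n_{k+1}\ge1$) gives $[x_1^{j_1}\cdots x_k^{j_k}]\,(1-u)^{-n_{k+1}}=\binom{j_1+\cdots+j_k+n_{k+1}-1}{j_1,\dots,j_k,n_{k+1}-1}$.

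Next I would multiply these two series, using $(1-u)^{-1}(1-u)^{-n_{k+1}}=(1-u)^{-(n_{k+1}+1)}$, and compare the coefficient of $x_1^{n_1}\cdots x_k^{n_k}$ on both sides. On the product side, the Cauchy product formula together with the substitution $j_\ell=n_\ell-i_\ell$ (so that $j_\ell\ge0$ forces $0\le i_\ell\le n_\ell$) reproduces exactly the left-hand side of the claimed identity. On the other side, the displayed expansion applied with $m=n_{k+1}$ gives exactly $\binom{n_1+\cdots+n_k+n_{k+1}}{n_1,\dots,n_k,n_{k+1}}$. Equating the two coefficients finishes the proof.

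The argument is short, so the main thing to watch is the bookkeeping of shifted exponents: it is the factor $(1-u)^{-n_{k+1}}$, not $(1-u)^{-(n_{k+1}+1)}$, that produces the entries $n_{k+1}-1$ in the second multinomial coefficient of the sum, while $m=n_{k+1}$ is what is needed in the final extraction. I would sanity-check these conventions on the case $k=1$, where the statement reduces to the hockey-stick identity $\sum_{j=0}^{n_1}\binom{j+n_2-1}{n_2-1}=\binom{n_1+n_2}{n_2}$ already used in the proof of Proposition \ref{prop_k_1}. As an independent check, one can also argue bijectively: $\binom{n_1+\cdots+n_{k+1}}{n_1,\dots,n_{k+1}}$ counts the words containing $n_\ell$ indistinguishable letters of each type $\ell=1,\dots,k$ together with $n_{k+1}$ letters of a distinguished type, and classifying such a word according to the numbers $(i_1,\dots,i_k)$ of non-distinguished letters of each type that precede its first distinguished letter (which exists because $n_{k+1}\ge1$) partitions this count exactly as the left-hand side, the first multinomial coefficient involving only $k$ arguments precisely because no distinguished letter can occur before the first one.
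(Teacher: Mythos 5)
Your proposal is correct, but your primary argument is genuinely different from the paper's. You prove the identity algebraically: with $u=x_1+\cdots+x_k$, the expansion $(1-u)^{-(m+1)}=\sum_{i_1,\dots,i_k\ge0}\binom{i_1+\cdots+i_k+m}{i_1,\dots,i_k,m}x_1^{i_1}\cdots x_k^{i_k}$ (negative binomial series combined with the multinomial theorem) is applied with $m=0$ and $m=n_{k+1}-1$, the two series are multiplied, and the coefficient of $x_1^{n_1}\cdots x_k^{n_k}$ is extracted from $(1-u)^{-1}(1-u)^{-n_{k+1}}=(1-u)^{-(n_{k+1}+1)}$ on both sides; the Cauchy-product side gives the left-hand sum (via $j_\ell=n_\ell-i_\ell$) and the single-series side, with $m=n_{k+1}$, gives the right-hand multinomial coefficient. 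All the exponent bookkeeping you flag checks out, and the hypothesis $n_{k+1}\ge1$ is used exactly where needed (so that $m=n_{k+1}-1\ge0$). The paper instead argues purely combinatorially: both sides count arrangements of a multiset with $n_\ell$ letters of type $\ell$, classified according to which letters precede the first letter of the distinguished type $k+1$ --- which is essentially the bijective "independent check" you sketch at the end, stated there more crisply than in the paper (your observation that the first multinomial coefficient has only $k$ arguments because no distinguished letter can precede the first one is precisely the point the paper's wording leaves implicit). The generating-function route is more mechanical and self-verifying, at the cost of introducing formal power series; the paper's counting argument is shorter and stays elementary, but relies on the reader filling in the word-counting details. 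Either proof suffices for the role this lemma plays in Propositions \ref{prop_k_1} and \ref{theorem_dist_conjunta}.
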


\begin{lemma}\label{propdistrgeom} Let $\mathbf{Z} = (Z_1, \dots, Z_n) \sim \mathrm{Geom}^*(\pi_1, \dots, \pi_n)$, then
	$$
	\left(Z_1,\sum_{i=2}^n Z_i\right)\sim\dgeom\left(\pi_1,\sum_{i=2}^n \pi_i\right).
	$$
\begin{proof}
	It is straightforward from the interpretation of the multidimensional Geometric distribution. Each component $Z_i$ corresponds to the number of failures of type $i$ (which occurs with probability $\pi_i$) before the first success in independent trials. Consequently, summing the last $n-1$ components corresponds to aggregating their respective types of failure. The probability of this new kind of failure is $\pi_2+\cdots+\pi_n$.
\end{proof}
\end{lemma}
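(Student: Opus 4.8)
The plan is to verify the claim by a direct computation of the probability mass function of $\bigl(Z_1,\sum_{i=2}^n Z_i\bigr)$ starting from the joint law of $\mathbf Z$. Fix $m_1,s\in\Z_+$. By the definition of $\dgeom(\pi_1,\dots,\pi_n)$ and by summing over all compositions $m_2+\cdots+m_n=s$,
\[
\prob\Bigl(Z_1=m_1,\ \textstyle\sum_{i=2}^n Z_i=s\Bigr)=\rho\,\pi_1^{m_1}\sum_{m_2+\cdots+m_n=s}\binom{m_1+m_2+\cdots+m_n}{m_1,m_2,\dots,m_n}\,\pi_2^{m_2}\cdots\pi_n^{m_n},
\]
where $\rho=1-\sum_{i=1}^n\pi_i$. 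The one algebraic input I need is the factorization of the multinomial coefficient,
\[
\binom{m_1+s}{m_1,m_2,\dots,m_n}=\binom{m_1+s}{m_1,s}\binom{s}{m_2,\dots,m_n},
\]
which isolates the $m_1$-dependence and leaves, inside the sum, the quantity $\sum_{m_2+\cdots+m_n=s}\binom{s}{m_2,\dots,m_n}\pi_2^{m_2}\cdots\pi_n^{m_n}=(\pi_2+\cdots+\pi_n)^s$ by the multinomial theorem.

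Combining these two steps gives
\[
\prob\Bigl(Z_1=m_1,\ \textstyle\sum_{i=2}^n Z_i=s\Bigr)=\rho\,\pi_1^{m_1}\Bigl(\sum_{i=2}^n\pi_i\Bigr)^{\!s}\binom{m_1+s}{m_1,s},
\]
and it remains only to recognize the right-hand side as the PMF of $\dgeom\bigl(\pi_1,\sum_{i=2}^n\pi_i\bigr)$. For this I check admissibility of the new parameters: both $\pi_1$ and $\sum_{i=2}^n\pi_i$ lie in $[0,1)$, and their sum equals $\sum_{i=1}^n\pi_i<1$, so the target distribution is well defined; moreover its normalizing constant is $1-\pi_1-\sum_{i=2}^n\pi_i=\rho$, matching the display above. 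This completes the argument.

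There is no substantive obstacle here: the proof is a short calculation, and the only things requiring care are the bookkeeping in the multinomial factorization and the admissibility check for the aggregated parameters. As an alternative, one can bypass the computation entirely and argue from the ball-in-urn interpretation recorded after Definition \ref{def_geometric}: relabeling failure types $2,\dots,n$ as a single type with probability $\pi_2+\cdots+\pi_n$ leaves the count of type-$1$ failures before the first success unchanged and converts the remaining counts into the total of the merged type, which is exactly the asserted bivariate geometric law; I would likely present this interpretational proof as the main one and relegate the PMF computation to a remark, or vice versa.
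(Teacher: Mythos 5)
Your computation is correct and establishes the lemma, but by a genuinely different route from the paper. The paper's proof is purely interpretational: it reads each $Z_i$ as the count of type-$i$ failures (probability $\pi_i$) before the first success in a sequence of independent trials, and observes that merging failure types $2,\dots,n$ into a single type of probability $\pi_2+\cdots+\pi_n$ immediately gives the claimed bivariate law --- essentially the alternative you sketch in your closing remark, and nothing more. Your main argument instead works directly from the probability mass function in Definition \ref{def_geometric}: you marginalize over all compositions $m_2+\cdots+m_n=s$, factor the multinomial coefficient as $\binom{m_1+s}{m_1,\dots,m_n}=\binom{m_1+s}{m_1}\binom{s}{m_2,\dots,m_n}$, and collapse the inner sum by the multinomial theorem, then check admissibility of the aggregated parameters and that the normalizing constant $\rho$ is unchanged. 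All of these steps are sound. What your version buys is self-containedness: the paper's definition of $\mathrm{Geom}^*$ is given only through its PMF, so the failure-counting interpretation invoked in its proof is itself an unproved (if standard) identification, whereas your calculation needs nothing beyond the definition. What the paper's version buys is brevity and probabilistic transparency, at the cost of leaning on that interpretation. Presenting the interpretational argument as the headline proof with your PMF computation as a verification, as you suggest, would combine both advantages.
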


\begin{lemma}\label{lemma_diofanto}
Let \( a, b \in \mathbb{Z}_+ \) with \( a < b \) and \( \gcd(a, b) = 1 \). The nonnegative integer solutions to the linear Diophantine equation
\[
ax - by = -a
\]
are given by
\[
(x, y) = (-1 + bi, ai), \quad i \in \mathbb{N}.
\]
\begin{proof}
The result is obvious.
\end{proof}
\end{lemma}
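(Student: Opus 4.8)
The plan is to convert the equation into a divisibility condition, read off all integer solutions in terms of a single parameter, and then impose nonnegativity. First I would rewrite $ax - by = -a$ as $a(x+1) = by$. Since $\gcd(a,b) = 1$, this forces $b \mid (x+1)$ by Euclid's lemma, so I would introduce an integer parameter $i$ with $x + 1 = bi$, i.e.\ $x = bi - 1$. Substituting back gives $abi = by$, and cancelling $b$---legitimate because $0 \le a < b$ forces $b \ge 1$---yields $y = ai$. The converse inclusion is immediate, so the full set of integer solutions of $ax - by = -a$ is $\{(bi - 1,\, ai) : i \in \mathbb{Z}\}$.

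Next I would restrict to nonnegative solutions. Since $b \ge 1$, the inequality $bi - 1 \ge 0$ holds for an integer $i$ exactly when $i \ge 1$, and for such $i$ one automatically has $y = ai \ge 0$. Hence the nonnegative integer solutions are precisely those with $i \in \mathbb{N}$, namely $(x,y) = (-1 + bi,\, ai)$, which is the claimed parametrization.

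The argument is entirely elementary, so there is no real obstacle; the only points that merit a moment's care are the invocation of coprimality to pass from $a(x+1) = by$ to $b \mid (x+1)$, and the discarding of the value $i = 0$ (which would give $x = -1 < 0$) so that the solution family is indexed by $i \ge 1$ rather than $i \ge 0$. This is why the statement can reasonably be dispatched with a one-line justification.
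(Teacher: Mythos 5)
Your argument is correct and is precisely the elementary reasoning the paper dismisses with ``The result is obvious'': rewrite the equation as $a(x+1)=by$, use $\gcd(a,b)=1$ to get $b\mid(x+1)$, and then impose nonnegativity to discard $i=0$. No gap, and no divergence from the paper's (implicit) approach.
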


\begin{lemma} \label{lemma_suma_series_potencias} 
Let $m\in\mathbb{Z}_+$ and $x\in(0,1)$.
\begin{enumerate} [a)]
\item \label{lemma_ssp_aparatado_a} 
\[
\sum_{\ell=0}^{\infty} {m+\ell\choose \ell}\ x^\ell\frac{1}{\ell+m+1} = \frac{1}{x^{m+1}}\int_{0}^{x}\frac{u^m}{(1-u)^{m+1}}\mathrm{d}u.
\]
\item \label{lemma_ssp_aparatado_b} 
\begin{align*}
\sum_{\ell=0}^\infty {m+\ell\choose \ell}\ x^\ell \frac{\ell^2}{(\ell+m+1)^2} &= \frac{(m+1)}{x^{m+1}}\int_0^x \frac{u^{m+1}(\log{x} - \log{u})}{(1-u)^{m+3}} \mathrm{d}u \\ 
&\qquad + \frac{(m+1)^2}{x^{m+1}}\int_0^x \frac{u^{m+2}(\log{x} - \log{u})}{(1-u)^{m+3}} \mathrm{d}u.
\end{align*}
\end{enumerate}

\begin{proof}
\begin{enumerate}[a)]
\item Consider the function \( f(x) = \sum_{\ell \geq 0} \binom{m+\ell}{\ell}\ x^\ell \frac{1}{\ell+m+1} \) and define \( g(x) = x^{m+1} f(x) \). First, observe that
\[
g'(x) = x^m \sum_{\ell=0}^\infty \binom{m+\ell}{\ell}\ x^\ell = \frac{x^m}{(1-x)^{m+1}} \mathbb{P}(Y \geq 0),
\]
where \( Y \sim \mathrm{NegBin}(m+1; 1-x) \). This gives us
\[
g'(x) = \frac{x^m}{(1-x)^{m+1}}.
\]
Now, since \( g(0) = 0 \), integrating both sides yields
\[
x^{m+1} f(x) = g(x) = \int_0^x \frac{u^m}{(1-u)^{m+1}} \, \mathrm{d}u.
\]

\item A similar approach can be applied by defining \( f(x) = \sum_{\ell \geq 0} \binom{m+\ell}{\ell}\ x^\ell \frac{\ell^2}{(\ell+m+1)^2} \) and \( g(x) = x^{m+1} f(x) \). Differentiating \( g(x) \), we obtain
\[
g'(x) = x^{-1} \sum_{\ell=0}^\infty \binom{m+\ell}{\ell}\ x^{\ell+m+1} \frac{\ell^2}{\ell+m+1}.
\]
Next, define \( h(x) = x g'(x) \), and upon differentiating, we have
\[
h'(x) = \frac{x^m}{(1-x)^{m+1}} \mathbb{E}(Y^2),
\]
where \( Y \sim \mathrm{NegBin}(m+1; 1-x) \). Thus,
\[
h'(x) = \frac{(m+1) x^{m+1}}{(1-x)^{m+3}} (1 + (m+1)x).
\]
Given that \( h(0) = 0 \), it follows that
\[
x g'(x) = h(x) = \int_0^x \frac{(m+1) u^{m+1}}{(1-u)^{m+3}} (1 + (m+1)u) \, \mathrm{d}u.
\]
Finally, since \( g(0) = 0 \), we have
\[
x^{m+1} f(x) = g(x) = \int_0^x \frac{1}{v} \int_0^v \frac{(m+1) u^{m+1}}{(1-u)^{m+3}} (1 + (m+1)u) \, \mathrm{d}u \, \mathrm{d}v.
\]
Applying Fubini’s Theorem to the right-hand side and solving for \( f(x) \) yields the desired result.

\end{enumerate}
\end{proof}
\end{lemma}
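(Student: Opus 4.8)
\section*{Proof proposal for Lemma \ref{lemma_suma_series_potencias}}

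The plan is to treat each identity as an assertion about a power series with radius of convergence $1$ and to reduce it, via term-by-term differentiation, to a negative-binomial generating-function (and moment) computation followed by an integration. For part~\ref{lemma_ssp_aparatado_a}, set $f(x):=\sum_{\ell\ge0}\binom{m+\ell}{\ell}x^{\ell}/(\ell+m+1)$ and $g(x):=x^{m+1}f(x)$; this is again a power series convergent on $(-1,1)$ with $g(0)=0$. Differentiating term by term (legitimate on compact subsets of $(-1,1)$) absorbs the denominator: $g'(x)=\sum_{\ell\ge0}\binom{m+\ell}{\ell}x^{\ell+m}=x^{m}\sum_{\ell\ge0}\binom{m+\ell}{\ell}x^{\ell}$, and the remaining sum is the total probability mass of a $\dbinneg(m+1;1-x)$ law in its number-of-failures parametrization, hence equals $(1-x)^{-(m+1)}$. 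Thus $g'(x)=x^{m}(1-x)^{-(m+1)}$, and integrating from $0$ to $x$ and dividing by $x^{m+1}$ gives the stated formula.

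For part~\ref{lemma_ssp_aparatado_b} the idea is the same, but the summand now carries $\ell^{2}/(\ell+m+1)^{2}$, which forces two successive differentiations. Take $f(x):=\sum_{\ell\ge0}\binom{m+\ell}{\ell}x^{\ell}\,\ell^{2}/(\ell+m+1)^{2}$ and $g(x):=x^{m+1}f(x)$; differentiating once removes one power of $\ell+m+1$, giving $g'(x)=x^{-1}\sum_{\ell\ge0}\binom{m+\ell}{\ell}\,\ell^{2}\,x^{\ell+m+1}/(\ell+m+1)$. Set $h(x):=x\,g'(x)$ and differentiate again to clear the last factor: $h'(x)=x^{m}\sum_{\ell\ge0}\binom{m+\ell}{\ell}\ell^{2}x^{\ell}=x^{m}(1-x)^{-(m+1)}\,\espe(Y^{2})$ with $Y\sim\dbinneg(m+1;1-x)$. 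Substituting $\espe(Y^{2})=\var(Y)+(\espe Y)^{2}=(m+1)x\bigl(1+(m+1)x\bigr)(1-x)^{-2}$ yields $h'(x)=(m+1)x^{m+1}\bigl(1+(m+1)x\bigr)(1-x)^{-(m+3)}$, and since $h(0)=0$ integration recovers $h(x)=(m+1)\int_{0}^{x}u^{m+1}\bigl(1+(m+1)u\bigr)(1-u)^{-(m+3)}\dif u$.

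Finally I would undo the two substitutions. From $g'(x)=h(x)/x$ and $g(0)=0$ we get $g(x)=\int_{0}^{x}h(v)\,v^{-1}\dif v=(m+1)\int_{0}^{x}v^{-1}\!\left(\int_{0}^{v}\frac{u^{m+1}(1+(m+1)u)}{(1-u)^{m+3}}\dif u\right)\dif v$; because the integrand is nonnegative on the triangle $\{0<u<v<x\}$, Tonelli's theorem lets us swap the order of integration, the inner integral becomes $\int_{u}^{x}v^{-1}\dif v=\log x-\log u$, and expanding $1+(m+1)u$ and dividing by $x^{m+1}$ produces exactly the two stated integrals. I expect the main effort to be purely the bookkeeping in part~\ref{lemma_ssp_aparatado_b}; the only steps needing a line of justification are the term-by-term differentiations (immediate inside the radius of convergence) and the integrability of the final integrands near $u=0$ (where $u^{m+1}\log(x/u)\to0$) and near $u=x<1$ (where $(1-u)^{-(m+3)}$ is bounded), neither of which is a genuine obstacle.
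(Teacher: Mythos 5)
Your proposal is correct and follows essentially the same route as the paper's proof: the same auxiliary functions $g(x)=x^{m+1}f(x)$ and $h(x)=xg'(x)$, the same identification of the resulting sums with the normalizing constant and second moment of a negative binomial distribution, and the same final interchange of the order of integration. The only (welcome) additions are your explicit justifications of term-by-term differentiation and of the Tonelli step, which the paper leaves implicit.
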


\begin{thebibliography}{99}


\bibitem[Abbas(2023)]{Abbas23} \href{https://doi.org/10.1007/s44199-023-00058-4} {Abbas, N. (2023). On classical and Bayesian reliability of systems using bivariate generalized geometric distribution. \emph{Journal of Statistical Theory and Applications}, 22, 151--169.}


\bibitem[Ahmadi(2024)]{Ahmadi24} \href{https://doi.org/10.1080/16843703.2024.2430809} {Ahmadi, M. V. (2024). Estimating stress-strength reliability based on two-parameter exponential records in the presence of inter-record times. \emph{Quality Technology \& Quantitative Management}, 1--32.}


\bibitem[Arnold \emph{et al.}(1998)]{Arnold11} \href{http://dx.doi.org/10.1002/9781118150412}{Arnold, B. C., Balakrishnan, N., Nagaraja, H. N. (1998). \emph{Records}. John Wiley \& Sons.}


\bibitem[Ayer \emph{et al.}(1955)]{Ayer55} \href{https://doi.org/10.1214/aoms/1177728423}{Ayer, M., Brunk, H. D., Ewing, G. M., Reid, W. T., Silverman, E. (1955). An Empirical Distribution Function for Sampling with Incomplete Information. \emph{The Annals of Mathematical Statistics}, 26(4), 641--647.}


\bibitem[Ahsanullah (2004)]{Ahs04}{Ahsanullah, M. (2004). \emph{Record values--theory and applications}. University Press of America.}


\bibitem[Ahsanullah \& Nevzorov (2015)]{Ahs15} Ahsanullah, M., Nevzorov, V. B. (2015). \emph{Records via probability theory}. Springer.


\bibitem[Arabi Belaghi \emph{et al.}(2015)]{Arabi15} \href{https://doi.org/10.1007/s00362-014-0591-9} {Arabi Belaghi, R., Arashi, M., Tabatabaey, S. M. M. (2015). Improved estimators of the distribution function based on lower record values. \emph{Statistical papers}, 56(2), 453-477.}


\bibitem[Balakrishnan \emph{et al.}(2005)]{Balakrishnan2005} \href{https://doi.org/10.1239/aap/1127483746}{Balakrishnan, N., Pakes, A. G., Stepanov, A. (2005). On the number and sum of near-record observations. \emph{Advances in Applied Probability}, 37(3), 765--780.}


\bibitem[Castaño-Martínez \emph{et al.}(2013)]{Castano13} \href{https://doi.org/10.1016/j.jspi.2012.06.024}{Castaño-Martínez, A., López-Blázquez, F., Salamanca-Miño, B. (2013). On the convolution order of weak records. \emph{Journal of Statistical Planning and Inference}, 143(1), 107--115.}

\bibitem[Castillo-Mateo \emph{et al.}(2024)]{Castillo24} \href{https://doi.org/10.1080/01621459.2024.2427430}{Castillo-Mateo, J., Gelfand, A. E., Gracia-Tabuenca, Z., Asín, J., Cebrián, A. C. (2024). Spatio-Temporal Modeling for Record-Breaking Temperature Events in Spain. Journal of the American Statistical Association, 1–22.} 


\bibitem[Chandler(1952)]{Chandler52}  \href{https://doi.org/10.1111/j.2517-6161.1952.tb00115.x}{Chandler, K. (1952). The distribution and frequency of record values.  \emph{Journal of the Royal Statistical Society: Series B}, 14(2), 220--228.}


\bibitem[Davison \& Hinkley(1997)]{Davison97} \href{https://doi.org/10.1017/CBO9780511802843}{Davison, A. C., Hinkley, D. V. (1997). \emph{Bootstrap Methods and Their Application}. Cambrigde University Press.}


\bibitem[Empacher \emph{et al.}(2024)]{Empacher24} \href{https://doi.org/10.1007/s13385-024-00397-1}
 {Empacher, C., Kamps, U.,  Schmiedt, A. B. (2024). Prediction intervals for future Pareto record claims.  \emph{European Actuarial Journal}, 1--35.}


\bibitem[Gouet \emph{et al.}(2007)]{Gouet07} \href{https://doi.org/10.3150/07-BEJ6027}{Gouet, R., López, F. J., Sanz, G. (2007). Asymptotic Normality for the Counting Process of Weak Records and $\delta$-Records. \emph{Bernoulli}, 13(3), 754--781.}


\bibitem[Gouet \emph{et al.}(2008)]{Gouet08} \href{https://doi.org/10.1016/j.spl.2008.01.067}{Gouet, R., López, F. J., Sanz, G. (2008). Laws of large numbers for the number of weak records. \emph{Statistics \& Probability Letters}, 78(14), 2010--2017.}


\bibitem[Gouet \emph{et al.}(2012)]{Gouet12} \href{https://doi.org/10.1007/s11749-011-0242-6}{Gouet, R., López, F. J., Sanz, G. (2012). On $\delta$-record observations: asymptotic rates for the counting process and elements of maximum likelihood estimation. \emph{TEST}, 21, 188--214.}


\bibitem[Gouet \emph{et al.}(2014)]{Gouet14} \href{https://doi.org/10.1016/j.csda.2014.04.002}{Gouet, R., Maldonado, L. P., López, F. J., Sanz, G. (2014). Statistical inference for the geometric distribution based on $\delta$-records. \emph{Computational Statistics \& Data Analysis}, 78, 21--32.}


\bibitem[Gouet \emph{et al.}(2020)]{Gouet20} \href{https://doi.org/10.3390/sym12010020}{Gouet, R., Maldonado, L. P., López, F. J., Sanz, G. (2020). Statistical Inference for the Weibull Distribution Based on $\delta$-Record Data. \emph{Symmetry}, 12(1), 20.}


\bibitem[Gulati \& Padgett(1994)]{Gulati94} \href{https://doi.org/10.1080/03610929408831319}{Gulati, S., Padgett, W. J. (1994). Smooth nonparametric estimation of the distribution and density functions from record-breaking data. \emph{Communications in Statistics - Theory and Methods}, 23(5), 1259--1274.}


\bibitem[Gulati \& Padgett(1995)]{Gulati95} \href{https://doi.org/10.2307/3315380}{Gulati, S., Padgett, W. J. (1995). Nonparametric function estimation from inversely sampled record-breaking data. \emph{Canadian Journal of Statistics}, 23, 359--368.}


\bibitem[Gulati \& Padgett(2003)]{Gulati03} \href{https://doi.org/10.1007/978-0-387-21549-5}{Gulati, S., Padgett, W. J. (2003). Parametric and Nonparametric Inference from Record-Breaking Data. \emph{Lecture Notes in Statistics}, 172. Springer-Verlag.}


\bibitem[Guo \emph{et al.}(2020)]{Guo20} \href{https://doi.org/10.1002/qre.2704}
{Guo, B., Zhu, N., Wang, W., Wang, H. (2020). Constructing exact tolerance intervals for the exponential distribution based on record values. \emph{Quality and Reliability Engineering International}, 36(7), 2398--2410.}


\bibitem[Jafari \emph{et al.}(2025)]{Jafari25} \href{https://doi.org/10.1016/j.cam.2024.116151} {Jafari, A. A., Firoozi, S.,  Pak, A. (2025). Analysis of the lifetime performance index in Gompertz distribution based on $k$-record values. \emph{Journal of Computational and Applied Mathematics}, 453, 116151.} 

\bibitem[Jose \& Sathar(2022)]{Jose22} \href{https://doi.org/10.1016/j.cam.2021.113816} {Jose, J., Sathar, A. Characterization of exponential distribution using extropy based on lower $k$-records and its application in testing. \emph{Journal of Computational and Applied Mathematics}, 402, 113816.} 

\bibitem[Karlis \& Patilea(2007)]{Karlis07} \href{https://doi.org/10.1016/j.csda.2006.07.037}{Karlis, D., Patilea, V. (2007). Confidence intervals of the hazard rate function for discrete distributions using mixtures. \emph{Computational Statistics \& Data Analysis}, 51(11), 5388--5401.}


\bibitem[Khan \emph{et al.}(2024)]{Khan24} \href{https://doi.org/10.1007/s13198-024-02284-0} {Khan, M. J. S., Ansari, F., Azhad, Q. J.,  Kabdwal, N. C. (2024). Moments and inferences of inverted topp-leone distribution based on record values. \emph{International Journal of System Assurance Engineering and Management}, 15,  2623--2633.} 


\bibitem[Kumar \emph{et al.}(2024)]{Kumar24} \href{https://doi.org/10.1007/s40304-022-00286-7} {Kumar, D., Kumar, M.,  Saran, J. (2024). Power Generalized Weibull distribution based on record values and associated inferences with bladder cancer data example. \emph{Communications in Mathematics and Statistics}, 12, 213--238.} 


\bibitem[Johnson \emph{et al.}(1997)]{Johnson97}\href{}{Johnson, N. L., Kotz, S., Balakrishnan, N. (1997). \emph{Discrete Multivariate Distributions}. John Wiley \& Sons.}


\bibitem[López-Blázquez \& Salamanca-Miño(2013)]{LBSM13} \href{https://doi.org/10.1007/s11749-013-0340-8}{López-Blázquez, F., Salamanca-Miño, B. (2013). Distribution theory of $\delta$-record values. Case $\delta\le0$. \emph{TEST}, 22, 715--738.}


\bibitem[Nair \emph{et al.}(2018)]{Nair18}\href{https://doi.org/10.1016/C2014-0-01528-6}{Nair, U., Sankaran, P. G.,  Balakrishnan, N. (2018). \emph{Reliability modelling and analysis in discrete time}. Academic Press.}


\bibitem[Newer(2024)]{Newer24} \href{https://doi.org/10.1016/j.cam.2024.115798} {Newer, H. A. (2024). Prediction of future observations based on ordered extreme $k$-records ranked set sampling with unequal fixed and random sample sizes. \emph{Journal of Computational and Applied Mathematics}, 445, 115798.}


\bibitem[Pak \& Dey (2019)]{Pak19} \href{https://doi.org/10.1016/j.cam.2018.08.012}{Pak, A., Dey, S. (2019). Statistical inference for the power Lindley model based on record values and inter-record times. \emph{Journal of Computational and Applied Mathematics}, 347, 156-172.}

\bibitem[Paul \& Thomas(2015)]{Paul15} \href{https://doi.org/10.6092/issn.1973-2201/6100} {Paul, J.,  Thomas, P. Y. (2015). On generalized upper(k)record values from Weibull distribution. \emph{Statistica}, 75(3), 313-330.}


\bibitem[Saini(2025)]{Saini25} \href{https://doi.org/10.1007/s00180-024-01497-y}{Saini, S. (2025). Advancements in reliability estimation for the exponentiated Pareto distribution: a comparison of classical and Bayesian methods with lower record values. \emph{Computational Statistics}, 40, 353--382.}


\bibitem[Samaniego \& Whitaker(1988)]{Samaniego88} \href{https://doi.org/10.1002/1520-6750(198804)35:2<221::AID-NAV3220350207>3.0.CO;2-2}{Samaniego, F. J., Whitaker, L. R. (1988). On estimating population characteristics from record-breaking observations II. Nonparametric results. \emph{Naval Research Logistics}, 35(2), 221--236.}


\bibitem[Shafay \emph{et al.}(2017)]{Shafay17} \href{https://doi.org/10.1080/03610926.2015.1004093} {Shafay, A. R., Balakrishnan, N., Ahmadi, J. (2017). Bayesian prediction of order statistics with fixed and random sample sizes based on $k$-record values from Pareto distribution. \emph{Communications in Statistics - Theory and Methods}, 46(2), 721-735.} 


\bibitem[Schimmenti \emph{et al.}(2021)]{Schimmenti21} \href{https://doi.org/10.1103/PhysRevE.104.064129} {Schimmenti, V. M., Majumdar, S. N.,  Rosso, A. (2021). Statistical properties of avalanches via the $c$-record process. \emph{Physical Review E}, 104(6), 064129.}


\bibitem[Stepanov(1993)]{Stepanov93} \href{https://doi.org/10.1137/1137112}{Stepanov, A. V. (1993). Limit theorems for weak records. \emph{Theory of Probability \& Its Applications}, 37(3), 570--574.}


\bibitem[Tripathi \emph{et al.}(2022)]{Tripathi22} \href{https://doi.org/10.1080/02286203.2021.1923979} {Tripathi, A., Singh, U.,  Kumar Singh, S. (2022). Estimation of $P(X<Y)$ for Gompertz distribution based on upper records. \emph{International Journal of Modelling and Simulation}, 42(3), 388--399.}


\bibitem[Vervaat(1973)]{Vervaat73} \href{https://doi.org/10.1016/0304-4149(73)90015-X}{Vervaat, W. (1973). Limit theorems for records from discrete distributions. \emph{Stochastic Processes and their Applications}, 1, 317--334.}


\bibitem[Vidović \emph{et al.}(2024)]{Vidovic24} \href{https://doi.org/10.1080/24754269.2024.2322312} {Vidović, Z., Nikolić, J.,  Perić, Z. (2024). Properties of $k$-record posteriors for the Weibull model.  \emph{Statistical Theory and Related Fields}, 8(2), 152--162.}


\bibitem[Wang \emph{et al.}(2020)]{Wang20} \href{https://doi.org/10.1016/j.cam.2020.112969}{Wang, L., Tripathi, Y. M., Wu, S. J., Zhang, M. (2020). Inference for confidence sets of the generalized inverted exponential distribution under $k$-record values. \emph{Journal of Computational and Applied Mathematics}, 380, 112969.}


\bibitem[Wang \emph{et al.}(2024)]{Wang24} \href{https://doi.org/10.1080/16843703.2024.2424695} {Wang, L., Zhang, C., Wei, Z.,  Lio, Y. (2024). Inference of record failure times from unit generalized Rayleigh distribution. \emph{Quality Technology \& Quantitative Management}, 1--20.}


\bibitem[Wu \emph{et al.}(2019)]{Wu19} \href{https://doi.org/10.1016/j.physa.2019.04.237}{Wu, M-H., Wang, J. P., Ku, K-W. (2019). Earthquake, Poisson and Weibull distributions. \emph{Physica A: Statistical Mechanics and its Applications}, 526, 121001.}


\bibitem[Xie \& Goh(1993)]{Xie93} \href{https://doi.org/10.1002/qre.4680090205}{Xie, M., Goh, T. N. (1993). Spc of near zero-defect process subject to random shocks. \emph{Quality and Reliability Engineering International}, 9(2), 89--93.}


\bibitem[Yu \emph{et al.}(2023)]{Yu23} \href{https://doi.org/10.3934/mbe.2023550} {Yu, Y., Wang, L., Dey, S.,  Liu, J. (2023). Estimation of stress-strength reliability from unit-Burr III distribution under records data. \emph{Mathematical Biosciences and Engineering}, 20(7), 12360--12379.}


\bibitem[Zhao \emph{et al.}(2023)]{Zhao23} \href{https://doi.org/10.1080/03610918.2020.1855450}  {Zhao, X., Wei, S., Cheng, W., Zhang, P., Zhang, Y.,  Xu, Q. (2023). Upper record values from the generalized Pareto distribution and associated statistical inference. \emph{Communications in Statistics-Simulation and Computation}, 52(2), 369--391.}


\end{thebibliography}
\end{document}